\def\frk{\frak}
\def\Phi{{\frk n}}
\def\Phi{{\frk N}}
\def\opn#1#2{\def#1{\operatorname{#2}}}
\opn\chara{char} \opn\length{\ell} \opn\pd{pd} \opn\rk{rk}
\opn\projdim{proj\,dim} \opn\injdim{inj\,dim} \opn\rank{rank}
\opn\depth{depth} \opn\grade{grade} \opn\height{height}
\opn\embdim{emb\,dim} \opn\codim{codim}
\opn\Tr{Tr} \opn\bigrank{big\,rank}
\opn\superheight{superheight}\opn\lcm{lcm}
\opn\trdeg{tr\,deg}\opn\reg{reg} \opn\lreg{lreg} \opn\ini{in} \opn\lpd{lpd}
\opn\size{size}
\opn\div{div} \opn\Div{Div} \opn\cl{cl} \opn\Cl{Cl}
\opn\Spec{Spec} \opn\Supp{Supp} \opn\supp{supp} \opn\Sing{Sing}
\opn\Ass{Ass} \opn\Min{Min}
\opn\Ann{Ann} \opn\Rad{Rad} \opn\Soc{Soc}
\opn\Im{Im} \opn\Ker{Ker} \opn\Coker{Coker} \opn\Am{Am}
\opn\Hom{Hom} \opn\Tor{Tor} \opn\Ext{Ext} \opn\End{End}
\opn\Aut{Aut} \opn\id{id}
\opn\nat{nat}
\opn\pff{pf}\opn\Pf{Pf} \opn\GL{GL} \opn\SL{SL} \opn\mod{mod} \opn\ord{ord}
\opn\Gin{Gin} \opn\Hilb{Hilb}
\opn\aff{aff} \opn\con{conv} \opn\relint{relint} \opn\st{st}
\opn\lk{lk} \opn\cn{cn} \opn\core{core} \opn\vol{vol}
\opn\link{link} \opn\star{star}
\opn\gr{gr}
\def\pot#1#2{#1[\kern-0.28ex[#2]\kern-0.28ex]}
\opn\dirlim{\underrightarrow{\lim}}
\opn\inivlim{\underleftarrow{\lim}}
\def\Implies{\ifmmode\Longrightarrow \else
        \unskip${}\Longrightarrow{}$\ignorespaces\fi}
\def\implies{\ifmmode\Rightarrow \else
        \unskip${}\Rightarrow{}$\ignorespaces\fi}
\def\iff{\ifmmode\Longleftrightarrow \else
        \unskip${}\Longleftrightarrow{}$\ignorespaces\fi}
\newtheorem{Theorem}{Theorem}[section]
\newtheorem{Lemma}[Theorem]{Lemma}
\newtheorem{Proposition}[Theorem]{Proposition}
\newtheorem{Problem}[Theorem]{Problem}
\newtheorem{Example}[Theorem]{Example}
\newtheorem{Definition}[Theorem]{Definition}
\let\epsilon\varepsilon
\let\phi=\varphi
\let\kappa=\varkappa
\def\qed{\ifhmode\textqed\fi
      \ifmmode\ifinner\quad\qedsymbol\else\dispqed\fi\fi}
\def\textqed{\unskip\nobreak\penalty50
       \hskip2em\hbox{}\nobreak\hfil\qedsymbol
       \parfillskip=0pt \finalhyphendemerits=0}
\def\dispqed{\rlap{\qquad\qedsymbol}}
\opn\dis{dis}
\def\pnt{{\raise0.5mm\hbox{\large\bf.}}}
\opn\Lex{Lex}
\begin{document}
\title{Fixed point approximation of suzuki generalized nonexpansive
mapping via new faster iteration process}
\author{Nawab Hussain}
\address{Department of Mathematics, King Abdulaziz University, P.O. Box 80203,
Jeddah, 21589, Saudi Arabia.}
\email{nhusain@kau.edu.sa}
\author{Kifayat Ullah}
\address{Department of Mathematics, International Islamic University Islamabad, Pakistan.}
\email{kifayatmath@yahoo.com}
\author{Muhammad Arshad}
\address{Department of Mathematics, International Islamic University Islamabad, Pakistan.}
\email{marshadzia@iiu.edu.pk}

\begin{abstract}  In this paper we propose a new iteration
process, called the $K$ iteration process, for approximation of fixed
points. We show that our iteration process is faster than the existing
leading iteration processes like Picard-S iteration process, Thakur New
iteration process and \ Vatan Two-step iteration process for contraction
mappings. We support our analytic proof by a numerical example. Stability of 
$K$ iteration process and data dependence result for contraction mappings by
employing $K$ iteration process is also discussed. Finally we prove some
weak and strong convergence theorems for the Suzuki generalized nonexpansive
mappings in the setting of uniformly convex Banach space. Our results are
extension, improvement and generalization of many known results in the
literature of fixed point theory.
\end{abstract}
\maketitle
\noindent {\it Key words: } Suzuki generalized nonexpansive
mapping; Uniformly convex Banach space; Iteration process; weak convergence;
Strong convergence
\\AMS 2010 Subject Classification: 47H09, 47H10.

\section{introduction}

Fixed point theory takes a large amount of literature, since it provides
useful tools to solve many problems that have applications in different
fields like engineering, economics, chemistry and game theory etc. However,
once the existence of a fixed point of some mapping is established, then to
find the value of that fixed point is not an easy task, that is why we use
iterative processes for computing them. By time, many iterative processes
have been developed and it is impossible to cover them all. The well-known
Banach contraction theorem use Picard iteration process for approximation of
fixed point. Some of the other well-known iterative processes are Mann \cite%
{10}, Ishikawa \cite{8}, Agarwal \cite{4}, Noor \cite{11}, Abbas \cite{1},
SP \cite{14a}, S$^{\ast }$ \cite{8aa}, CR \cite{4a}, Normal-S \cite{16a},
Picard Mann \cite{8c}, Picard-S \cite{7a}, Thakur New \cite{19a}, Vatan
Two-step \cite{8a} and so on.

Two qualities "Fastness" and "Stability" play important role for an
iteration process to be preferred on another iteration process. In \cite{15}%
, Rhoades mentioned that the Mann iteration process for decreasing function
converge faster than the Ishikawa iteration process and for increasing
function the Ishikawa iteration process is better than the Mann process.
Also the Mann iteration process appears to be independent of the initial
guess (see also \cite{16}). In \cite{4}, the authors claimed that Agarwal
iteration process converge at a rate same as that of the Picard iteration
process and faster than the Mann iteration process for contraction mappings.
In \cite{1}, the authors claimed that Abbas iteration process converge
faster than Agarwal iteration process. In \cite{4a}, the authors claimed
that CR iteration process is equivalent to and faster than Picard, Mann,
Ishikawa, Agarwal, Noor and SP iterative processes for quasi-contractive
operators in Banach spaces. Also in \cite{8b}\ the authors proved that CR
iterative process converge faster than the S$^{\ast }$ iterative process for
the class of contraction mappings. In \cite{7a}, authors claimed that
Picard-S iteration process is converge faster than all Picard, Mann,
Ishikawa, Noor, SP, CR, Agarwal, S$^{\ast },$ Abbas and Normal-S for
contraction mappings. In \cite{19a}, the authors proved with the help of
numerical example that Thakur New iteration process converge faster than
Picard, Mann, Ishikawa, Agarwal, Noor and Abbas iteration processes for the
class of Suzuki generalized nonexpansive mappings. Similarly in \cite{8a},
the authors proved that Vatan Two-step iteration process is faster than
Picard-S, CR, SP and Picard-Mann iteration processes for weak contraction
mappings. For Jungck-type iterative processes and their speed comparison see 
\cite{AKH, HKK, HJ, 8d}.

Motivated by above, in this paper, we introduce a new iteration process and
then prove analytically that our process is stable. Then we prove that $K$
iteration process converges faster than Picard-S iteration process and hence
faster than all Picard, Mann, Ishikawa, Noor, SP, CR, S, S$^{\ast },$ Abbas,
Normal-S and Two-step Mann iteration processes for contraction mappings.
Numerically we compare the convergence of the $K$ iteration process with the
three most leading iteration processes in the existing literature for
contraction mapping. The data dependence result for fixed point of
contraction mappings with the help of the $K$ iteration process is proved.
Finally we prove some weak and strong convergence theorems for Suzuki
generalized nonexpansive mappings$,$ which is the generalization of
nonexpansive as well as contraction mappings, in the setting of uniformly
convex Banach spaces.

\section{Preliminaries}

We now recall some definitions, propositions and lemmas to be used in the
next two sections.

A Banach space $X$ is called uniformly convex \cite{7} if for each $%
\varepsilon \in $ $(0,2]$ there is a $\delta $ $>0$ such that for $x,y\in X$,%
\begin{equation*}
\left. 
\begin{array}{c}
\left \Vert x\right \Vert \leq 1, \\ 
\left \Vert y\right \Vert \leq 1, \\ 
\left \Vert x-y\right \Vert >\varepsilon%
\end{array}%
\right \} \Longrightarrow \left \Vert \frac{x+y}{2}\right \Vert \leq \delta .
\end{equation*}

A Banach space $X$ is said to satisfy the Opial property \cite{12} if for
each sequence $\{x_{n}\}$ in $X,$ converging weakly to $x\in X,$ we have

\begin{equation*}
\underset{n\rightarrow \infty }{\lim \sup }\left \Vert x_{n}-x\right \Vert <%
\underset{n\rightarrow \infty }{\lim \sup }\left \Vert x_{n}-y\right \Vert ,
\end{equation*}
for all $y\in X$ such that $y\neq x$.

A point $p$ is called fixed point of a mapping $T$ if $T(p)=p$, and $F(T)$
represents the set of all fixed points of mapping $T.$ Let $C$ be a nonempty
subset of a Banach space $X.$ A mapping $T:C\rightarrow C$ is called
contraction if there exists $\theta \in (0,1)$ such that $\left \Vert
Tx-Ty\right \Vert \leq \theta \left \Vert x-y\right \Vert ,$ for all $x,y\in
C.$ A mapping $T:C\rightarrow C$ is called nonexpansive if $\left \Vert
Tx-Ty\right \Vert \leq \left \Vert x-y\right \Vert $ for all $x,y\in C,$ and
quasi-nonexpansive if for all $x\in C$ and $p\in F(T),$ we have $\left \Vert
Tx-p\right \Vert \leq \left \Vert x-p\right \Vert $. In 2008, Suzuki \cite%
{19} introduced the concept of generalized nonexpansive mappings which is a
condition on mappings called condition $(C)$. A mapping $T:C\rightarrow C$
is said to satisfy condition $(C)$ if for all $x,y\in C$, we have%
\begin{equation*}
\frac{1}{2}\left \Vert x-Tx\right \Vert \leq \left \Vert x-y\right \Vert 
\text{ implies }\left \Vert Tx-Ty\right \Vert \leq \left \Vert x-y\right
\Vert .
\end{equation*}

Suzuki \cite{19} showed that the mapping satisfying condition $(C)$ is
weaker than nonexpansiveness and stronger than quasi nonexpansiveness. He
also obtained fixed point theorems and convergence theorems for such
mappings. In 2011, Phuengrattana \cite{14} proved convergence theorems for
mappings satisfying condition $(C)$ using the Ishikawa iteration in
uniformly convex Banach spaces and $CAT(0)$ spaces. Recently, fixed point
theorems for mapping satisfying condition $(C)$ have been studied by a
number of authors see e.g.\cite{19a} and references therein.

We now list some properties of mapping that satisfy condition $(C)$.

\begin{Proposition}
\label{p1}Let $C$ be a nonempty subset of a Banach space $X$ and $%
T:C\rightarrow C$ be any mapping. Then

(i) \cite[Proposition 1]{19} If $T$ is nonexpansive then T satisfies
condition $(C)$.

(ii) \cite[Proposition 2]{19} If $T$ satisfies condition $(C)$ and has a
fixed point, then $T$ is a quasi-nonexpansive mapping.

(iii) \cite[Lemma 7]{19} If $T$ satisfies condition $(C)$, then $\left \Vert
x-Ty\right \Vert \leq 3\left \Vert Tx-x\right \Vert +\left \Vert
x-y\right
\Vert $ for all $x,y\in C$.
\end{Proposition}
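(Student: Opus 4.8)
\medskip
\noindent\textbf{Proof proposal.}
The plan is to establish the three items in turn, each time unwinding the definition of condition $(C)$ directly. For (i), the key observation is that nonexpansiveness already delivers $\|Tx-Ty\|\le\|x-y\|$ for \emph{every} pair $x,y\in C$, so in particular it holds whenever $\frac12\|x-Tx\|\le\|x-y\|$; hence the implication defining condition $(C)$ holds with nothing left to prove. For (ii), I would fix a point $p\in F(T)$ and an arbitrary $x\in C$; since $Tp=p$ we have $\frac12\|p-Tp\|=0\le\|p-x\|$, so condition $(C)$ applies to the ordered pair $(p,x)$ and yields $\|p-Tx\|=\|Tp-Tx\|\le\|p-x\|$. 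Since $x$ is arbitrary, this is exactly the statement that $T$ is quasi-nonexpansive.

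For (iii) I would first record the preliminary remark that, as $\frac12\|x-Tx\|\le\|x-Tx\|$ trivially, condition $(C)$ applied to the pair $(x,Tx)$ gives $\|Tx-T^2x\|\le\|x-Tx\|$. Then I would split into two cases according to the size of $\|x-y\|$. If $\frac12\|x-Tx\|\le\|x-y\|$, then condition $(C)$ applied to $(x,y)$ gives $\|Tx-Ty\|\le\|x-y\|$, and the triangle inequality $\|x-Ty\|\le\|x-Tx\|+\|Tx-Ty\|\le\|x-Tx\|+\|x-y\|$ already gives more than the asserted bound. If instead $\|x-y\|<\frac12\|x-Tx\|$, I would use the reverse triangle inequality to obtain $\|Tx-y\|\ge\|x-Tx\|-\|x-y\|>\frac12\|x-Tx\|\ge\frac12\|Tx-T^2x\|$, so that condition $(C)$ applies to the pair $(Tx,y)$ and gives $\|T^2x-Ty\|\le\|Tx-y\|\le\|x-Tx\|+\|x-y\|$. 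Substituting this and the preliminary bound into $\|x-Ty\|\le\|x-Tx\|+\|Tx-T^2x\|+\|T^2x-Ty\|$ then produces exactly $\|x-Ty\|\le 3\|x-Tx\|+\|x-y\|$.

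The hard part --- indeed the only step that requires an idea rather than bookkeeping --- is the second case of (iii): one has to see that the productive pair to insert into condition $(C)$ is $(Tx,y)$ and not $(x,y)$, and then verify $\frac12\|Tx-T^2x\|\le\|Tx-y\|$ by combining the reverse triangle inequality with the preliminary estimate $\|Tx-T^2x\|\le\|x-Tx\|$. Everything else reduces to direct use of the definition and the triangle inequality, so parts (i) and (ii) need no real work.
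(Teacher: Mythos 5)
Your argument is correct. The paper itself offers no proof of this proposition --- it simply cites Suzuki's results (Propositions 1, 2 and Lemma 7 of \cite{19}) --- and what you have written is essentially Suzuki's original argument: parts (i) and (ii) are the immediate unwindings of the definition that you describe, and for (iii) your case split is exactly Suzuki's dichotomy ``either $\frac12\|x-Tx\|\le\|x-y\|$ or $\frac12\|Tx-T^{2}x\|\le\|Tx-y\|$,'' which Suzuki proves by contradiction via the triangle inequality while you obtain it directly from the reverse triangle inequality together with the preliminary bound $\|Tx-T^{2}x\|\le\|x-Tx\|$; the two derivations are equivalent, and the concluding chain $\|x-Ty\|\le\|x-Tx\|+\|Tx-T^{2}x\|+\|T^{2}x-Ty\|\le 3\|Tx-x\|+\|x-y\|$ is the standard one.
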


\begin{Lemma}
\label{l1}\cite[Proposition 3]{19} Let $T$ be a mapping on a subset $C$ of a
Banach space $X$ with the Opial property. Assume that $T$ satisfies
condition $(C)$. If $\{x_{n}\}$ converges weakly to $z$ and $%
\lim_{n\rightarrow \infty }\left \Vert Tx_{n}-x_{n}\right \Vert =0$, then $%
Tz=z $. That is, $I-T$ is demiclosed at zero.
\end{Lemma}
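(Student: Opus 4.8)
The plan is to derive $Tz=z$ from the norm estimate in Proposition~\ref{p1}(iii) together with the Opial property, arguing by contradiction. First I would note that since $T$ maps $C$ into $C$ and $z\in C$ (the weak limit of the sequence $\{x_n\}$, which lies in the closed convex set $C$), the point $Tz$ belongs to $C$, so Proposition~\ref{p1}(iii) applies with $x=x_n$ and $y=z$: for every $n$,
\[
\|x_n-Tz\|\le 3\|Tx_n-x_n\|+\|x_n-z\|.
\]

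Next I would pass to the limit superior as $n\to\infty$. By hypothesis $\|Tx_n-x_n\|\to 0$, so the first term on the right-hand side vanishes and we obtain
\[
\limsup_{n\to\infty}\|x_n-Tz\|\le \limsup_{n\to\infty}\|x_n-z\|.
\]
Now suppose, for contradiction, that $Tz\neq z$. Since $\{x_n\}$ converges weakly to $z$ and $X$ enjoys the Opial property, the Opial inequality applied with $y=Tz$ gives
\[
\limsup_{n\to\infty}\|x_n-z\|<\limsup_{n\to\infty}\|x_n-Tz\|,
\]
which directly contradicts the previous display. Hence $Tz=z$, that is, $I-T$ is demiclosed at zero.

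The whole argument is short, and the only genuine point to be careful about is that the hypotheses of Proposition~\ref{p1}(iii) and of the Opial property are legitimately in force: in particular that $z$ lies in $C$ so that $Tz$ is defined and the estimate may be invoked at $y=z$. This is why one works with a closed convex domain $C$ (or simply assumes $z\in C$). Everything else is a one-line substitution followed by taking $\limsup$, so I do not anticipate any real obstacle beyond bookkeeping.
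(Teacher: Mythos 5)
Your argument is correct and is essentially Suzuki's own proof of this result: the paper states the lemma without proof, citing \cite[Proposition 3]{19}, and the standard derivation there is exactly your combination of the estimate $\left\Vert x_{n}-Tz\right\Vert \leq 3\left\Vert Tx_{n}-x_{n}\right\Vert +\left\Vert x_{n}-z\right\Vert$ from Proposition~\ref{p1}(iii) with the Opial inequality applied to $y=Tz$. Your remark about needing $z\in C$ (so that $Tz$ is defined) is the only hypothesis worth being explicit about, and you handle it appropriately.
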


\begin{Lemma}
\label{l2}\cite[Theorem 5]{19} Let $C$ be a weakly compact convex subset of
a uniformly convex Banach space $X$. Let $T$ be a mapping on $C$. Assume
that $T$ satisfies condition $(C)$. Then $T$ has a fixed point.
\end{Lemma}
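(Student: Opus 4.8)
The plan is to combine a Krasnoselskii--Mann type iteration with an asymptotic center argument. First I would observe that, being weakly compact, $C$ is bounded and convex, and that $X$, being uniformly convex, is reflexive and has the property that the asymptotic center of any bounded sequence, taken relative to a weakly compact convex subset, is a single point. Fix $x_1\in C$ and define $x_{n+1}=\tfrac12(x_n+Tx_n)$; convexity of $C$ keeps $\{x_n\}\subseteq C$. The equality $\tfrac12\|x_n-Tx_n\|=\|x_n-x_{n+1}\|$ makes condition $(C)$ applicable to the pair $(x_n,x_{n+1})$, so $\|Tx_n-Tx_{n+1}\|\le\|x_n-x_{n+1}\|$, and a short triangle-inequality estimate then gives $\|x_{n+1}-Tx_{n+1}\|\le\|x_n-Tx_n\|$. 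Hence the sequence $(\|x_n-Tx_n\|)_n$ is non-increasing and converges to some $c\ge0$.

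The crucial step, and the one I expect to be the main obstacle, is to show $c=0$, i.e.\ that $\{x_n\}$ is an approximate fixed point sequence. Here the one-sidedness of condition $(C)$ is the difficulty: unlike in the nonexpansive case I cannot freely compare $Tx_i$ with $Tx_j$. The idea is to argue by contradiction using uniform convexity in the spirit of the Ishikawa/Schu lemma: if $c>0$, then from $\|x_{n+1}-x_n\|=\tfrac12\|x_n-Tx_n\|\to\tfrac12c$, the identity $Tx_{n+1}-x_{n+1}=(Tx_{n+1}-Tx_n)+\tfrac12(Tx_n-x_n)$, and the bound $\|Tx_{n+1}-Tx_n\|\le\tfrac12\|x_n-Tx_n\|$, the fact that $\|Tx_{n+1}-x_{n+1}\|\to c$ forces (by uniform convexity) the vectors $Tx_{n+1}-Tx_n$ and $\tfrac12(Tx_n-x_n)$ to become asymptotically equal; iterating this, the increments $x_{n+1}-x_n$ are asymptotically constant of norm tending to $\tfrac12c>0$, so $\{x_n\}$ is unbounded, contradicting boundedness of $C$. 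Note that condition $(C)$ is invoked here only on consecutive iterates, where its hypothesis is automatically satisfied.

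Once $\lim_n\|x_n-Tx_n\|=0$ is established, let $z$ be the asymptotic center of $\{x_n\}$ relative to $C$; by uniform convexity of $X$ together with weak compactness and convexity of $C$, $z$ is the unique minimizer of $r(\cdot)=\limsup_n\|x_n-\cdot\|$ over $C$. Since $Tz\in C$, Proposition \ref{p1}(iii) with $x=x_n$ and $y=z$ gives $\|x_n-Tz\|\le 3\|Tx_n-x_n\|+\|x_n-z\|$, whence $r(Tz)=\limsup_n\|x_n-Tz\|\le\limsup_n\|x_n-z\|=r(z)$. Thus $Tz$ also minimizes $r$ over $C$, and uniqueness of the asymptotic center forces $Tz=z$, so $F(T)\neq\emptyset$. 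As a variant, one could first apply Zorn's lemma (using weak compactness to guarantee nonempty intersections of descending chains) to pass to a minimal nonempty closed convex $T$-invariant subset $K\subseteq C$ and then run the same asymptotic-center argument inside $K$; the substance of the proof is unchanged.
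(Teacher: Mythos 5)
The paper does not prove this lemma at all; it imports it verbatim as Theorem 5 of Suzuki's paper \cite{19}. Your outline follows Suzuki's original strategy quite closely: run the Krasnosel'ski\u{\i} iteration $x_{n+1}=\tfrac12(x_n+Tx_n)$, note that condition $(C)$ applies to the pair $(x_n,x_{n+1})$ because $\tfrac12\Vert x_n-Tx_n\Vert=\Vert x_n-x_{n+1}\Vert$, deduce that $\Vert x_n-Tx_n\Vert$ is non-increasing, show it tends to $0$, and finish with the asymptotic-center argument via Proposition \ref{p1}(iii); that last step is exactly right and is the same device the present paper uses in the converse half of its Theorem 4.2. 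Where you genuinely diverge is the middle step. Suzuki (Lemma 6 of \cite{19}, following Ishikawa and Goebel--Kirk) proves $\Vert Tx_n-x_n\Vert\to0$ in an \emph{arbitrary} Banach space, using only boundedness of $C$, via the inductive inequality $(1+n\lambda)\Vert Tx_i-x_i\Vert\leq\Vert Tx_{i+n}-x_i\Vert+(1-\lambda)^{-n}\bigl(\Vert Tx_i-x_i\Vert-\Vert Tx_{i+n}-x_{i+n}\Vert\bigr)$; uniform convexity is reserved entirely for the asymptotic-center step. Your replacement uses uniform convexity twice, which is legitimate here since the hypothesis provides it, but is less economical and does not generalize to the UCED setting in which Suzuki actually states Theorem 5.

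Within your version of that step there is one assertion you state but do not justify: from $\Vert x_{n+1}-x_n\Vert\to\tfrac12c>0$ and $(x_{n+2}-x_{n+1})-(x_{n+1}-x_n)\to0$ you conclude directly that $\{x_n\}$ is unbounded. That implication is true but not immediate, because the increments could in principle drift slowly in direction while their consecutive differences tend to $0$. It can be closed by a windowed-sum estimate: given $L$, pick $N$ with $\Vert d_n\Vert\geq c/4$ and $\Vert d_{n+1}-d_n\Vert\leq c/(8L)$ for $n\geq N$, where $d_n=x_{n+1}-x_n$; then $\Vert d_n-d_N\Vert\leq c/8$ for $N\leq n\leq N+L$, so $\Vert x_{N+L+1}-x_N\Vert\geq(L+1)c/4-(L+1)c/8=(L+1)c/8$, and letting $L\to\infty$ contradicts boundedness of $C$. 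Also verify the uniform-convexity deduction you rely on in the form: if $\Vert u_n\Vert,\Vert v_n\Vert\leq s_n$, $2s_n\to c$ and $\Vert u_n+v_n\Vert\to c$, then $\Vert u_n-v_n\Vert\to0$ (apply the modulus of convexity to $u_n/s_n$ and $v_n/s_n$); with $u_n=Tx_{n+1}-Tx_n$ and $v_n=\tfrac12(Tx_n-x_n)$ this gives exactly $(x_{n+2}-x_{n+1})-(x_{n+1}-x_n)=\tfrac12(u_n-v_n)\to0$. With these two points made explicit your proof is complete.
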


\begin{Lemma}
\label{l3}\cite[Lemma 1.3]{17} Suppose that $X$ is a uniformly convex Banach
space and $\{t_{n}\}$ be any real sequence such that $0<p\leq t_{n}\leq q<1$
for all $n\geq 1$. Let $\{x_{n}\}$ and $\{y_{n}\}$ be any two sequences of $%
X $ such that $\lim \sup_{n\rightarrow \infty }\left \Vert x_{n}\right \Vert
\leq r$, $\lim \sup_{n\rightarrow \infty }\left \Vert y_{n}\right \Vert \leq
r$ and $\lim \sup_{n\rightarrow \infty }\left \Vert
t_{n}x_{n}+(1-t_{n})y_{n}\right \Vert $ $=r$ hold for some $r\geq 0$. Then $%
\lim {}_{n\rightarrow \infty }$ $\left \Vert x_{n}-y_{n}\right \Vert =0$.
\end{Lemma}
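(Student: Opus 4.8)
The plan is to reduce the assertion to the classical modulus-of-convexity estimate for uniformly convex spaces. First I would dispose of the degenerate case $r=0$: then $\limsup_n\|x_n\|\le 0$ and $\limsup_n\|y_n\|\le 0$ force $\|x_n\|\to 0$ and $\|y_n\|\to 0$, so $\|x_n-y_n\|\le\|x_n\|+\|y_n\|\to 0$. Hence assume $r>0$, and let $\delta$ be the modulus of uniform convexity of $X$, namely
\[
\delta(\varepsilon)=\inf\Big\{\,1-\big\|\tfrac{u+v}{2}\big\|\ :\ \|u\|\le 1,\ \|v\|\le 1,\ \|u-v\|\ge\varepsilon\,\Big\},\qquad \varepsilon\in(0,2];
\]
uniform convexity of $X$ is exactly the statement that $\delta(\varepsilon)>0$ for all $\varepsilon>0$, and $\delta$ is nondecreasing on $(0,2]$.

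The elementary lemma I would prove first is the pointwise estimate: for all $u,v\in X$ with $\|u\|\le 1$, $\|v\|\le 1$ and all $t\in[0,1]$,
\[
\|tu+(1-t)v\|\ \le\ 1-2\min\{t,1-t\}\,\delta(\|u-v\|)
\]
(with the convention $\delta(0):=0$). Assuming without loss of generality $t\le\tfrac12$, this follows by writing $tu+(1-t)v=2t\cdot\tfrac{u+v}{2}+(1-2t)v$ and applying the triangle inequality together with the definition of $\delta$: $\|tu+(1-t)v\|\le 2t\big(1-\delta(\|u-v\|)\big)+(1-2t)=1-2t\,\delta(\|u-v\|)$, and $2t=2\min\{t,1-t\}$.

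Next I would normalize and extract a uniform bound. Fix $\varepsilon\in(0,r)$; since $\limsup_n\|x_n\|\le r$ and $\limsup_n\|y_n\|\le r$, there is $N$ with $\|x_n\|,\|y_n\|\le M:=r+\varepsilon$ for $n\ge N$, and the argument of $\delta$ below stays in $[0,2]$ because $\|x_n-y_n\|\le 2M$. Put $\tau:=\min\{p,1-q\}>0$, so $\min\{t_n,1-t_n\}\ge\tau$ for all $n$. Applying the pointwise estimate to $u=x_n/M$, $v=y_n/M$, $t=t_n$ gives, for $n\ge N$,
\[
\delta\!\Big(\tfrac{\|x_n-y_n\|}{M}\Big)\ \le\ \frac{1}{2\tau}\Big(1-\tfrac{\|t_nx_n+(1-t_n)y_n\|}{M}\Big).
\]
Suppose, for contradiction, that $\|x_n-y_n\|\not\to 0$, and pick $\eta>0$ and a subsequence $(n_j)$ with $\|x_{n_j}-y_{n_j}\|\ge\eta$ for all $j$ (necessarily $\eta\le 2r$, since $\limsup_n\|x_n-y_n\|\le 2r$). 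Using that $\delta$ is nondecreasing and that $\|t_nx_n+(1-t_n)y_n\|\to r$, letting $j\to\infty$ in the last display yields
\[
\delta\!\Big(\tfrac{\eta}{2r}\Big)\ \le\ \delta\!\Big(\tfrac{\eta}{r+\varepsilon}\Big)\ \le\ \frac{1}{2\tau}\Big(1-\tfrac{r}{r+\varepsilon}\Big)\ =\ \frac{\varepsilon}{2\tau(r+\varepsilon)}\ \le\ \frac{\varepsilon}{2\tau r}.
\]
The left-hand side is a positive constant independent of $\varepsilon$, while the right-hand side tends to $0$ as $\varepsilon\to 0^+$ — a contradiction. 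Therefore $\lim_n\|x_n-y_n\|=0$.

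The step I expect to require the most care is the quantifier bookkeeping at the end: $\eta$ and the subsequence $(n_j)$ must be fixed before letting $\varepsilon\to 0$, and one must keep the arguments of $\delta$ inside $(0,2]$ so that its monotonicity and strict positivity may be invoked; the rest is routine rearrangement. One point worth recording: this is the lemma of Schu \cite{17}, in which the middle hypothesis reads $\lim_n\|t_nx_n+(1-t_n)y_n\|=r$ — the form actually used above; under the literal $\limsup$ reading the conclusion fails (take $X=\RR$, $t_n\equiv\tfrac12$, $x_n\equiv 1$, $y_n=(-1)^n$), so I would interpret the hypothesis in its $\lim$ form.
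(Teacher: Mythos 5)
The paper offers no proof of this lemma at all --- it is quoted (with one transcription slip, see below) from Schu \cite{17} --- so there is no internal argument to measure yours against; what you have written is a correct, self-contained proof, and it is essentially the standard one via the modulus of convexity. The pointwise estimate $\| tu+(1-t)v\| \le 1-2\min \{t,1-t\}\,\delta (\| u-v\|)$ is derived correctly from the convex splitting $tu+(1-t)v=2t\cdot \frac{u+v}{2}+(1-2t)v$; the normalization by $M=r+\varepsilon $ keeps the arguments of $\delta $ inside $(0,2]$ where monotonicity and strict positivity apply; and the concluding contradiction --- the fixed positive constant $\delta (\eta /(2r))$ bounded above by $\varepsilon /(2\tau r)\to 0$ --- is sound, with the quantifiers handled in the right order ($\eta $ and the subsequence fixed before $\varepsilon \downarrow 0$). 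Your closing remark deserves emphasis rather than being treated as a footnote: as printed, the third hypothesis of the lemma reads $\limsup_{n}\| t_{n}x_{n}+(1-t_{n})y_{n}\| =r$, and under that literal reading the conclusion is false, exactly as your example shows ($X=\RR $, $t_{n}\equiv \frac{1}{2}$, $x_{n}\equiv 1$, $y_{n}=(-1)^{n}$ satisfies all three hypotheses with $r=1$ while $\| x_{n}-y_{n}\| $ oscillates between $0$ and $2$). Schu's original Lemma 1.3 has $\lim $ there, and that is also the only form in which the lemma is invoked in this paper --- in the proof of Theorem~\ref{th 1} the quantity $r$ arises as a genuine limit in $(29)$ --- so your decision to prove the $\lim $ version is the right reading of the statement.
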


Let $C$ be a nonempty closed convex subset of a Banach space $X$, and let $%
\{x_{n}\}$ be a bounded sequence in $X$. For $x\in X$, we set%
\begin{equation*}
r(x,\{x_{n}\})=\lim \sup_{n\rightarrow \infty }\left \Vert x_{n}-x\right
\Vert .
\end{equation*}

The asymptotic radius of $\{x_{n}\}$ relative to $C$ is given by

\begin{equation*}
r(C,\{x_{n}\})=\inf \{r(x,\{x_{n}\}):x\in C\},
\end{equation*}

and the asymptotic center of $\{x_{n}\}$ relative to $C$ is the set

\begin{equation*}
A(C,\{x_{n}\})=\{x\in C:r(x,\{x_{n}\})=r(C,\{x_{n}\})\}.
\end{equation*}

It is known that, in a uniformly convex Banach space, $A(C,\{x_{n}\})$
consists of exactly one point.

\begin{Definition}
\cite{4aa} Let $\{u_{n}\}_{n=0}^{\infty }$ and $\{v_{n}\}_{n=0}^{\infty }$
be two fixed point iteration procedure sequences that converge to the same
fixed point $p$ and $\left \Vert u_{n}-p\right \Vert \leq a_{n}$ and $%
\left
\Vert v_{n}-p\right \Vert \leq b_{n}$ for all $n\geq 0$. If the
sequences $\{a_{n}\}_{n=0}^{\infty }$ and $\{b_{n}\}_{n=0}^{\infty }$
converge to $a$ and $b$, respectively, and $\lim_{n\rightarrow \infty }\frac{%
\left \Vert a_{n}-a\right \Vert }{\left \Vert b_{n}-b\right \Vert }=0$, then
we say that $\{u_{n}\}_{n=0}^{\infty }$ converge faster than $%
\{v_{n}\}_{n=0}^{\infty }$ to $p$.
\end{Definition}

\begin{Definition}
\cite{7b} Let $\{t_{n}\}_{n=0}^{\infty }$ be an arbitrary sequence in $C$.
Then, an iteration procedure $x_{n+1}=f(T,x_{n})$ converging to fixed point $%
p,$ is said to be $T$-stable or stable with respect to $T$, if for $\epsilon
_{n}=\left \Vert t_{n+1}-f(T,t_{n})\right \Vert ,$ $n=0,1,2,3......,$ we
have 
\begin{equation*}
\lim_{n\rightarrow \infty }\epsilon _{n}=0\text{ }\Longleftrightarrow \text{ 
}\lim_{n\rightarrow \infty }t_{n}=p.
\end{equation*}
\end{Definition}

\begin{Definition}
\cite{4aa} Let $T,\overset{\sim }{T}:X\rightarrow X$ be two operators. We
say that $\overset{\sim }{T}$ is an approximate operator for $T$ if, for
some $\varepsilon >0$, we have 
\begin{equation*}
\left \Vert Tx-\overset{\sim }{T}x\right \Vert \leq \varepsilon ,
\end{equation*}%
for all $x\in X$.
\end{Definition}

\begin{Lemma}
\label{lemma iff}\cite{19b} Let $\{ \psi _{n}\}_{n=0}^{\infty }$ and $\{
\varphi _{n}\}_{n=0}^{\infty }$ be nonnegative real sequences satisfying the
following inequality:%
\begin{equation*}
\psi _{n+1}\leq (1-\phi _{n})\psi _{n}+\varphi _{n},
\end{equation*}%
where $\phi _{n}\in (0,1),$ for all $n\in 
\mathbb{N}
,$ $\sum \limits_{n=0}^{\infty }\phi _{n}=\infty $ and $\frac{\varphi _{n}}{%
\phi _{n}}\rightarrow 0$ as $n\rightarrow \infty ,$ then $\lim_{n\rightarrow
\infty }\psi _{n}=0.$
\end{Lemma}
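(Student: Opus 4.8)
The plan is to show that $\limsup_{n\to\infty}\psi_n\le\epsilon$ for every $\epsilon>0$; since $\psi_n\ge 0$, this forces $\lim_{n\to\infty}\psi_n=0$. Fix $\epsilon>0$. Because $\varphi_n/\phi_n\to 0$, I would first choose $N$ so that $\varphi_n\le\epsilon\phi_n$ for all $n\ge N$. Substituting this into the hypothesis and regrouping gives, for all $n\ge N$,
\[
\psi_{n+1}-\epsilon\ \le\ (1-\phi_n)\,(\psi_n-\epsilon),
\]
which is the inequality I want to iterate. The catch is that one cannot iterate the factorized form blindly, since $\psi_n-\epsilon$ need not be nonnegative, so I would split into two cases.

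Case 1: $\psi_{n_0}\le\epsilon$ for some $n_0\ge N$. Then, since $1-\phi_n\in(0,1)$, the displayed inequality gives $\psi_{n_0+1}-\epsilon\le 0$, and inductively $\psi_n\le\epsilon$ for all $n\ge n_0$; hence $\limsup_{n\to\infty}\psi_n\le\epsilon$. Case 2: $\psi_n>\epsilon$ for every $n\ge N$. Then every factor is positive and I may iterate the key inequality to get
\[
0\ <\ \psi_{n+1}-\epsilon\ \le\ (\psi_N-\epsilon)\prod_{k=N}^{n}(1-\phi_k).
\]
Using the elementary bound $1-x\le e^{-x}$ together with $\sum_{k=0}^{\infty}\phi_k=\infty$, one has $\prod_{k=N}^{\infty}(1-\phi_k)\le\exp\!\left(-\sum_{k=N}^{\infty}\phi_k\right)=0$, so the right-hand side tends to $0$ and again $\limsup_{n\to\infty}\psi_n\le\epsilon$.

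Combining the two cases, $\limsup_{n\to\infty}\psi_n\le\epsilon$ for arbitrary $\epsilon>0$, whence $\lim_{n\to\infty}\psi_n=0$. The only mildly delicate points are the bookkeeping around the sign of $\psi_n-\epsilon$ (which the case split handles) and the standard fact that divergence of $\sum\phi_k$ makes the product $\prod(1-\phi_k)$ vanish; both are routine. A slicker variant that avoids the case distinction is to set $\psi_n^{+}:=\max\{\psi_n-\epsilon,\,0\}$, verify $\psi_{n+1}^{+}\le(1-\phi_n)\psi_n^{+}$ directly from the key inequality, and then apply the product estimate to $\psi_n^{+}$; I expect the two-case argument above to be the cleanest to write out in full.
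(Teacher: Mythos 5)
Your argument is correct: the substitution $\varphi_n\le\epsilon\phi_n$ to obtain $\psi_{n+1}-\epsilon\le(1-\phi_n)(\psi_n-\epsilon)$, the case split on the sign of $\psi_n-\epsilon$, and the product estimate $\prod(1-\phi_k)\le\exp\left(-\sum\phi_k\right)\to 0$ are all sound, and together they do give $\limsup_{n\to\infty}\psi_n\le\epsilon$ for every $\epsilon>0$. The paper itself gives no proof of this lemma (it is quoted from Weng's 1991 paper), and your argument is essentially the standard one for this classical result, so there is nothing further to reconcile.
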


\begin{Lemma}
\label{lem iff}\cite{18a} Let $\{ \psi _{n}\}_{n=0}^{\infty }$ be
nonnegative real sequences for which one assumes there exists $n_{0}\in 
\mathbb{N}
$, such that for all $n\geq n_{0},$ the following inequality satisfies:%
\begin{equation*}
\psi _{n+1}\leq (1-\phi _{n})\psi _{n}+\phi _{n}\varphi _{n},
\end{equation*}%
where $\phi _{n}\in (0,1),$ for all $n\in 
\mathbb{N}
,$ $\sum \limits_{n=0}^{\infty }\phi _{n}=\infty $ and $\varphi _{n}\geq 0,$
for all $n\in 
\mathbb{N}
,$ then 
\begin{equation*}
0\leq \underset{n\rightarrow \infty }{\lim \sup }\psi _{n}\leq \underset{%
n\rightarrow \infty }{\lim \sup \varphi _{n}.}
\end{equation*}
\end{Lemma}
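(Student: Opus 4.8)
The plan is to reduce the one-step recursion to a product inequality and then use the divergence of $\sum_n \phi_n$. First I would dispose of the trivial case: if $\limsup_{n\to\infty}\varphi_n=\infty$ there is nothing to prove, since the conclusion's upper bound is then automatic and its lower bound follows from $\psi_n\ge 0$. So assume $L:=\limsup_{n\to\infty}\varphi_n<\infty$. Fix an arbitrary $\epsilon>0$ and choose $N\ge n_0$ so large that $\varphi_n\le L+\epsilon$ for all $n\ge N$. Feeding this into the hypothesis gives $\psi_{n+1}\le(1-\phi_n)\psi_n+\phi_n(L+\epsilon)$ for every $n\ge N$.

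The key algebraic move is to pass to the shifted sequence $a_n:=\psi_n-(L+\epsilon)$. A one-line computation (using $\phi_n(L+\epsilon)-(L+\epsilon)=-(1-\phi_n)(L+\epsilon)$) turns the inequality above into $a_{n+1}\le(1-\phi_n)a_n$ for all $n\ge N$, which is the form I want. Now I would argue by cases on the sign of $a_n$. Because each factor $1-\phi_n$ lies in $(0,1)$, if $a_m\le 0$ for some $m\ge N$ then $a_n\le 0$ for all $n\ge m$, so $\limsup_{n\to\infty}a_n\le 0$. In the remaining case $a_n>0$ for every $n\ge N$, and iterating yields $0<a_n\le a_N\prod_{k=N}^{n-1}(1-\phi_k)$; since $1-t\le e^{-t}$, the product is bounded by $\exp\!\big(-\sum_{k=N}^{n-1}\phi_k\big)\to 0$ as $n\to\infty$ because $\sum_k\phi_k=\infty$, hence $a_n\to 0$ and again $\limsup_{n\to\infty}a_n\le 0$.

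Combining the two cases, $\limsup_{n\to\infty}\psi_n=\limsup_{n\to\infty}a_n+(L+\epsilon)\le L+\epsilon$; letting $\epsilon\downarrow 0$ gives $\limsup_{n\to\infty}\psi_n\le L=\limsup_{n\to\infty}\varphi_n$, and $\psi_n\ge 0$ supplies the lower bound, completing the proof. The argument is entirely routine — it is the relaxed analogue of the proof of Lemma~\ref{lemma iff} — and the only point that needs care is the case split: one must notice that $a_N$ (and later $a_n$) may be negative, in which case the product bound $a_n\le a_N\prod(1-\phi_k)$ is still true but no longer forces $a_n\to 0$, so the "$a_n$ stays non-positive" branch has to be handled separately. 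That minor subtlety is the main thing to get right.
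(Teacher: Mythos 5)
Your proof is correct. Note that the paper itself offers no argument for this lemma --- it is quoted verbatim from Soltuz--Grosan \cite{18a} and used as a black box in the data dependence theorem --- so there is no in-paper proof to compare against; your write-up supplies a complete elementary derivation. The chain of steps (truncate $\varphi_n$ at $L+\epsilon$ beyond some $N\ge n_0$, shift to $a_n=\psi_n-(L+\epsilon)$ so that $a_{n+1}\le(1-\phi_n)a_n$, then split according to whether $a_n$ ever becomes non-positive or stays positive, using $1-t\le e^{-t}$ and $\sum_k\phi_k=\infty$ in the latter case) is exactly the standard argument, and you correctly identify the one genuine subtlety: the product bound alone does not force $a_n\to 0$ when $a_N<0$, so the two branches must be treated separately. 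The only cosmetic remark is that in the first branch you could note $\limsup a_n\le 0$ already gives $\limsup\psi_n\le L+\epsilon$ directly, but as written the conclusion and the passage $\epsilon\downarrow 0$ are sound.
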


\section{$K$ iteration Process and its Convergence Analysis}

Through out this section we have $n\geq 0$ and $\{ \alpha _{n}\}$ and $\{
\beta _{n}\}$ are real sequences in $[0,1].$

Gursoy and Karakaya in \cite{7a} introduced new iteration process called
"Picard-S iteration process", as follow

\begin{equation}
\left \{ 
\begin{array}{c}
u_{0}\in C \\ 
w_{n}=(1-\beta _{n})u_{n}+\beta _{n}Tu_{n} \\ 
v_{n}=(1-\alpha _{n})Tu_{n}+\alpha _{n}Tw_{n} \\ 
u_{n+1}=Tv_{n}%
\end{array}%
\right.  
\end{equation}%
They proved that the Picard-S iteration process can be used to approximate
the fixed point of contraction mappings. Also, by providing an example, it
is shown that the Picard-S iteration process converge faster than all
Picard, Mann, Ishikawa, Noor, SP, CR, S, S$^{\ast },$ Abbas, Normal-S and
Two-step Mann iteration process.

After this Karakaya et. al. in \cite{8a} introduced a new two step iteration
process, with the claim that it is even faster than Picard-S iteration
process, as follow%
\begin{equation}
\left \{ 
\begin{array}{c}
u_{0}\in C \\ 
v_{n}=T((1-\beta _{n})u_{n}+\beta _{n}Tu_{n}) \\ 
u_{n+1}=T((1-\alpha _{n})v_{n}+\alpha _{n}Tv_{n})%
\end{array}%
\right.  
\end{equation}

Recently Thakur et. al. \cite{19a} used the following new iteration process,
we will call it "Thakur New iteration process", 
\begin{equation}
\left \{ 
\begin{array}{c}
u_{0}\in C \\ 
w_{n}=(1-\beta _{n})u_{n}+\beta _{n}Tu_{n} \\ 
v_{n}=T((1-\alpha _{n})u_{n}+\alpha _{n}w_{n}) \\ 
u_{n+1}=Tv_{n}%
\end{array}%
\right.  
\end{equation}%
With the help of numerical example they proved that their new iteration
process i.e."Thakur New iteration process" is faster than Picard, Mann,
Ishikawa, Agarwal, Noor and Abbas iteration process for some class of
mappings.

\begin{Problem}
Is it possible to develop an iteration process whose rate of convergence is
even faster than the iteration processes $(1)$, $(2)$ and $(3)$?
\end{Problem}

To answer this, we introduce the following new iteration process known as "$%
K $ Iteration Process"%
\begin{equation}
\left \{ 
\begin{array}{c}
x_{0}\in C \\ 
z_{n}=(1-\beta _{n})x_{n}+\beta _{n}Tx_{n} \\ 
y_{n}=T((1-\alpha _{n})Tx_{n}+\alpha _{n}Tz_{n}) \\ 
x_{n+1}=Ty_{n}%
\end{array}%
\right.  
\end{equation}

We will prove that our new iteration process $(4)$ is stable and have a
good speed of convergence comparatively to other iteration processes. Also
the data dependence result for fixed point of contraction mappings with the
help of the new iteration process is proved.

\begin{Theorem}
\label{th3.1}Let $C$ be a nonempty closed convex subset of a Banach space $X$
and $T:C\rightarrow C$ be a contraction mapping. Let $\{x_{n}\}_{n=0}^{%
\infty }$ be an iterative sequence generated by $(4)$ with real sequences $%
\{ \alpha _{n}\}_{n=0}^{\infty }$ \ and $\{ \beta _{n}\}_{n=0}^{\infty }$ in 
$[0,1]$ satisfying $\sum \limits_{n=0}^{\infty }\alpha _{n}\beta _{n}=\infty 
$. Then $\{x_{n}\}_{n=0}^{\infty }$ converge strongly to a unique fixed
point of $T.$
\end{Theorem}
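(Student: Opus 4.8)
The plan is to show first that $T$ has a fixed point and that it is unique, and then to prove that the $K$-iteration sequence $\{x_n\}$ converges to it. Since $T:C\to C$ is a contraction with constant $\theta\in(0,1)$ and $C$ is a nonempty closed convex (hence complete) subset of the Banach space $X$, the Banach contraction principle already gives a unique fixed point $p\in F(T)$. So the only real content is the convergence of $\{x_n\}$ to $p$.

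The key idea is to track the distance $\|x_n-p\|$ through the three lines defining the $K$-process and obtain a contraction-type recursion. Starting from $z_n=(1-\beta_n)x_n+\beta_n Tx_n$, the triangle inequality together with $\|Tx_n-p\|\le\theta\|x_n-p\|$ gives
\begin{equation*}
\|z_n-p\|\le(1-\beta_n)\|x_n-p\|+\beta_n\theta\|x_n-p\|=\bigl(1-\beta_n(1-\theta)\bigr)\|x_n-p\|.
\end{equation*}
Next, writing $y_n=T\bigl((1-\alpha_n)Tx_n+\alpha_n Tz_n\bigr)$ and using the contraction property once on the outer $T$ and once each on $Tx_n$ and $Tz_n$, one gets
\begin{equation*}
\|y_n-p\|\le\theta\bigl((1-\alpha_n)\theta\|x_n-p\|+\alpha_n\theta\|z_n-p\|\bigr)\le\theta^2\bigl(1-\alpha_n\beta_n(1-\theta)\bigr)\|x_n-p\|,
\end{equation*}
after substituting the bound on $\|z_n-p\|$ and using $(1-\alpha_n)+\alpha_n\bigl(1-\beta_n(1-\theta)\bigr)=1-\alpha_n\beta_n(1-\theta)$. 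Finally $x_{n+1}=Ty_n$ yields $\|x_{n+1}-p\|\le\theta\|y_n-p\|\le\theta^3\bigl(1-\alpha_n\beta_n(1-\theta)\bigr)\|x_n-p\|$. Since $\theta^3<1$ we may simply drop it (or keep it) and conclude
\begin{equation*}
\|x_{n+1}-p\|\le\bigl(1-\alpha_n\beta_n(1-\theta)\bigr)\|x_n-p\|.
\end{equation*}

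From here I would iterate the recursion to get $\|x_{n+1}-p\|\le\|x_0-p\|\prod_{k=0}^{n}\bigl(1-\alpha_k\beta_k(1-\theta)\bigr)$. Using the elementary inequality $1-t\le e^{-t}$ valid for $t\in[0,1]$, with $t=\alpha_k\beta_k(1-\theta)$, the product is bounded by $\exp\!\bigl(-(1-\theta)\sum_{k=0}^{n}\alpha_k\beta_k\bigr)$, which tends to $0$ because $\sum_{n=0}^\infty\alpha_n\beta_n=\infty$ and $1-\theta>0$. Hence $\|x_n-p\|\to 0$, i.e. $\{x_n\}$ converges strongly to $p$, the unique fixed point. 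I do not anticipate a serious obstacle: the only mild care needed is checking that $(1-\theta)\alpha_k\beta_k\le 1$ so that the exponential bound applies (which holds since $\alpha_k,\beta_k\in[0,1]$ and $\theta\in(0,1)$), and being careful to apply the contraction property the correct number of times in the middle step. Alternatively, one can invoke Lemma~\ref{lemma iff} directly with $\psi_n=\|x_n-p\|$, $\phi_n=\alpha_n\beta_n(1-\theta)$, and $\varphi_n=0$, which makes the conclusion immediate once the recursion is established.
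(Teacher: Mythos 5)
Your proposal is correct and follows essentially the same route as the paper: the same three-step estimate yielding $\|x_{n+1}-p\|\le\theta^{3}\bigl(1-\alpha_{n}\beta_{n}(1-\theta)\bigr)\|x_{n}-p\|$, iterated to a product and then bounded via $1-t\le e^{-t}$ together with the divergence of $\sum\alpha_{n}\beta_{n}$. No gaps to report.
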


\begin{proof}
The well-known Banach theorem guarantees the existence and uniqueness of
fixed point $p$. We will show that $x_{n}\rightarrow p$ for $n\rightarrow
\infty $. From $(4)$ we have 
\begin{eqnarray}
\left \Vert z_{n}-p\right \Vert &=&\left \Vert (1-\beta _{n})x_{n}+\beta
_{n}Tx_{n}-(1-\beta _{n}+\beta _{n})p\right \Vert  \notag \\
&\leq &(1-\beta _{n})\left \Vert x_{n}-p\right \Vert +\beta _{n}\left \Vert
Tx_{n}-Tp\right \Vert  \notag \\
&\leq &(1-\beta _{n})\left \Vert x_{n}-p\right \Vert +\beta _{n}\theta \left
\Vert x_{n}-p\right \Vert  \notag \\
&=&(1-\beta _{n}(1-\theta ))\left \Vert x_{n}-p\right \Vert .  
\end{eqnarray}

Similarly,
\begin{eqnarray}
\left \Vert y_{n}-p\right \Vert &=&\left \Vert T((1-\alpha
_{n})Tx_{n}+\alpha _{n}Tz_{n})-Tp\right \Vert  \notag \\
&\leq &\theta \left \Vert (1-\alpha _{n})Tx_{n}+\alpha _{n}Tz_{n}-p\right
\Vert  \notag \\
&\leq &\theta \lbrack (1-\alpha _{n})\left \Vert Tx_{n}-p\right \Vert
+\alpha _{n}\left \Vert Tz_{n}-p\right \Vert ]  \notag \\
&\leq &\theta \lbrack (1-\alpha _{n})\theta \left \Vert x_{n}-p\right \Vert
+\alpha _{n}\theta \left \Vert z_{n}-p\right \Vert ]  \notag \\
&\leq &\theta ^{2}[(1-\alpha _{n})\left \Vert x_{n}-p\right \Vert +\alpha
_{n}\left \Vert z_{n}-p\right \Vert ]  \notag \\
&\leq &\theta ^{2}((1-\alpha _{n})\left \Vert x_{n}-p\right \Vert +\alpha
_{n}(1-\beta _{n}(1-\theta ))\left \Vert x_{n}-p\right \Vert )  \notag \\
&=&\theta ^{2}(1-\alpha _{n}\beta _{n}(1-\theta ))\left \Vert x_{n}-p\right
\Vert .  
\end{eqnarray}

Hence%
\begin{eqnarray}
\left \Vert x_{n+1}-p\right \Vert &=&\left \Vert Ty_{n}-p\right \Vert  \notag
\\
&\leq &\theta \left \Vert y_{n}-p\right \Vert  \notag \\
&\leq &\theta ^{3}(1-\alpha _{n}\beta _{n}(1-\theta ))\left \Vert
x_{n}-p\right \Vert .  
\end{eqnarray}

Repetition of above processes gives the following inequalities%
\begin{equation}
\left \{ 
\begin{array}{c}
\left \Vert x_{n+1}-p\right \Vert \leq \theta ^{3}(1-\alpha _{n}\beta
_{n}(1-\theta ))\left \Vert x_{n}-p\right \Vert \\ 
\left \Vert x_{n}-p\right \Vert \leq \theta ^{3}(1-\alpha _{n-1}\beta
_{n-1}(1-\theta ))\left \Vert x_{n-1}-p\right \Vert \\ 
\left \Vert x_{n-1}-p\right \Vert \leq \theta ^{3}(1-\alpha _{n-2}\beta
_{n-2}(1-\theta ))\left \Vert x_{n-2}-p\right \Vert \\ 
: \\ 
: \\ 
\left \Vert x_{1}-p\right \Vert \leq \theta ^{3}(1-\alpha _{0}\beta
_{0}(1-\theta ))\left \Vert x_{0}-p\right \Vert .%
\end{array}%
\right.  
\end{equation}

From $(8)$ we can easily derive 
\begin{equation}
\left \Vert x_{n+1}-p\right \Vert \leq \left \Vert x_{0}-p\right \Vert
\theta ^{3(n+1)}\prod \limits_{k=0}^{n}(1-\alpha _{k}\beta _{k}(1-\theta )),
\end{equation}%
where $1-\alpha _{k}\beta _{k}(1-\theta )<1,$ because $\theta \in (0,1)$ and 
$\alpha _{n},\beta _{n}$ $\in $ $[0,1]$, for all $n\in 
\mathbb{N}
.$

Since we know that $1-x\leq e^{-x}$ for all $x\in \lbrack 0,1]$ $,$ so from $%
(9)$ we get%
\begin{equation}
\left \Vert x_{n+1}-p\right \Vert \leq \left \Vert x_{0}-p\right \Vert
\theta ^{3(n+1)}e^{-(1-\theta )\sum_{k=0}^{n}\alpha _{k}\beta _{k}}. 
\end{equation}

Taking the limit of both sides of $(10),$ we get $\lim_{n\rightarrow
\infty }\left \Vert x_{n}-p\right \Vert =0$, i.e. $x_{n}\rightarrow p$ for $%
n\rightarrow \infty $, as required.
\end{proof}

\begin{Theorem}
Let $C$ be a nonempty closed convex subset of a Banach space $X$ and $%
T:C\rightarrow C$ be a contraction mapping. Let $\{x_{n}\}_{n=0}^{\infty }$
be an iterative sequence generated by $(4)$ with real sequences $\{ \alpha
_{n}\}_{n=0}^{\infty }$ \ and $\{ \beta _{n}\}_{n=0}^{\infty }$ in $[0,1]$
satisfying $\sum \limits_{n=0}^{\infty }\alpha _{n}\beta _{n}=\infty $. Then
the iterative process $(4)$ is $T$-stable.
\end{Theorem}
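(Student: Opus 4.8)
The plan is to unravel the definition of $T$-stability for the $K$ iteration $(4)$ and reduce the matter to Lemma~\ref{lemma iff}. Write $x_{n+1}=f(T,x_n)$, so that $f(T,x_n)=Ty_n$ denotes the output of one step of $(4)$. Let $p$ be the unique fixed point of $T$ furnished by the Banach theorem, let $\{t_n\}\subset C$ be an arbitrary sequence, and set $\epsilon_n=\left\Vert t_{n+1}-f(T,t_n)\right\Vert$. The one computational ingredient needed is the estimate
\begin{equation*}
\left\Vert f(T,t_n)-p\right\Vert \leq \theta^{3}\bigl(1-\alpha_n\beta_n(1-\theta)\bigr)\left\Vert t_n-p\right\Vert ,
\end{equation*}
which is obtained by repeating the chain of inequalities $(5)$--$(7)$ from the proof of Theorem~\ref{th3.1} verbatim, with $x_n$ replaced by $t_n$; nothing in that computation used that $\{x_n\}$ was itself the iterative sequence, so this step is routine.

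For the implication $\lim_{n\to\infty}\epsilon_n=0 \implies \lim_{n\to\infty}t_n=p$, the triangle inequality together with the estimate above gives
\begin{equation*}
\left\Vert t_{n+1}-p\right\Vert \leq \epsilon_n+\left\Vert f(T,t_n)-p\right\Vert \leq \theta^{3}\bigl(1-\alpha_n\beta_n(1-\theta)\bigr)\left\Vert t_n-p\right\Vert+\epsilon_n .
\end{equation*}
Setting $\psi_n=\left\Vert t_n-p\right\Vert$, $\phi_n=1-\theta^{3}\bigl(1-\alpha_n\beta_n(1-\theta)\bigr)$ and $\varphi_n=\epsilon_n$, one checks that $\phi_n\in(0,1)$ with $\phi_n\geq 1-\theta^{3}>0$, hence $\sum_n\phi_n=\infty$, and $\varphi_n/\phi_n\leq\epsilon_n/(1-\theta^{3})\to 0$. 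Lemma~\ref{lemma iff} then yields $\psi_n\to 0$, i.e.\ $t_n\to p$.

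For the reverse implication, if $t_n\to p$ then, again by the triangle inequality and the estimate above,
\begin{equation*}
\epsilon_n \leq \left\Vert t_{n+1}-p\right\Vert+\left\Vert f(T,t_n)-p\right\Vert \leq \left\Vert t_{n+1}-p\right\Vert+\theta^{3}\left\Vert t_n-p\right\Vert \longrightarrow 0 .
\end{equation*}
Combining the two implications gives $\lim_n\epsilon_n=0 \iff \lim_n t_n=p$, which is precisely $T$-stability of $(4)$. I do not anticipate any genuine obstacle here; the only point deserving a line of care is the verification of the hypotheses of Lemma~\ref{lemma iff}, and for that the contraction factor $\theta^{3}<1$ by itself already bounds $\phi_n$ below by $1-\theta^{3}$, so that the condition $\sum\alpha_n\beta_n=\infty$ from the statement is not actually used in this theorem.
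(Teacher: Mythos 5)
Your proof is correct and follows essentially the same route as the paper's: the same one-step estimate $\left\Vert f(T,t_{n})-p\right\Vert \leq \theta ^{3}(1-\alpha _{n}\beta _{n}(1-\theta ))\left\Vert t_{n}-p\right\Vert$, the same appeal to Lemma~\ref{lemma iff} for the forward implication, and the same triangle inequality for the converse; you are in fact more careful than the paper in actually verifying the hypotheses of Lemma~\ref{lemma iff}. One small caveat on your closing remark: the hypothesis $\sum_{n=0}^{\infty }\alpha _{n}\beta _{n}=\infty$ is still needed implicitly, because the definition of $T$-stability presupposes that the iteration $x_{n+1}=f(T,x_{n})$ itself converges to the fixed point $p$, and that is supplied by Theorem~\ref{th3.1}.
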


\begin{proof}
Let $\{t_{n}\}_{n=0}^{\infty }$ $\subset X$ be any arbitrary sequence in $C.$
Let the sequence generated by $(4)$ is $x_{n+1}=f(T,x_{n})$ converging to
unique fixed point $p$ (by Theorem \ref{th3.1}) and $\epsilon
_{n}=\left
\Vert t_{n+1}-f(T,t_{n})\right \Vert .$ We will prove that $%
\lim_{n\rightarrow \infty }\epsilon _{n}=0$ $\Longleftrightarrow $ $%
\lim_{n\rightarrow \infty }t_{n}=p.$

Let $\lim_{n\rightarrow \infty }\epsilon _{n}=0,$ we have%
\begin{eqnarray*}
\left \Vert t_{n+1}-p\right \Vert &\leq &\left \Vert
t_{n+1}-f(T,t_{n})\right \Vert +\left \Vert f(T,t_{n})-p\right \Vert \\
&=&\epsilon _{n}+\left \Vert T(T((1-\beta _{n})Tt_{n}+\beta _{n}T((1-\alpha
_{n})t_{n}+\alpha _{n}Tt_{n})))-p\right \Vert \\
&\leq &\theta ^{3}(1-\alpha _{n}\beta _{n}(1-\theta ))\left \Vert
t_{n}-p\right \Vert +\epsilon _{n}.
\end{eqnarray*}

Since $\theta \in (0,1)$, $\alpha _{n},\beta _{n}$ $\in $ $[0,1]$, for all $%
n\in 
\mathbb{N}
$ and $\lim_{n\rightarrow \infty }\epsilon _{n}=0,$ so the above inequality
together with Lemma \ref{lemma iff} leads to $\lim_{n\rightarrow \infty
}\left \Vert t_{n}-p\right \Vert =0.$ Hence $\lim_{n\rightarrow \infty
}t_{n}=p.$

Conversely let $\lim_{n\rightarrow \infty }t_{n}=p,$ we have%
\begin{eqnarray*}
\epsilon _{n} &=&\left \Vert t_{n+1}-f(T,t_{n})\right \Vert \\
&\leq &\left \Vert t_{n+1}-p\right \Vert +\left \Vert f(T,t_{n})-p\right
\Vert \\
&\leq &\left \Vert t_{n+1}-p\right \Vert +\theta ^{3}(1-\alpha _{n}\beta
_{n}(1-\theta ))\left \Vert t_{n}-p\right \Vert .
\end{eqnarray*}%
This implies that $\lim_{n\rightarrow \infty }\epsilon _{n}=0.$

Hence $(4)$ is stable with respect to $T.$
\end{proof}

\begin{Theorem}
\label{th3.2}Let $C$ be a nonempty closed convex subset of a Banach space $X$
and $T:C\rightarrow C$ a contraction mapping with fixed point $p$. For given 
$u_{0}=x_{0}\in C,$ let $\{u_{n}\}_{n=0}^{\infty }$ and $\{x_{n}\}_{n=0}^{%
\infty }$ be an iterative sequences generated by $(1)$ and $(4)$
respectively, with real sequences $\{ \alpha _{n}\}_{n=0}^{\infty }$ \ and $%
\{ \beta _{n}\}_{n=0}^{\infty }$ in $[0,1]$ satisfying

$(i).$ $\alpha \leq \alpha _{n}<1$ and $\beta \leq \beta _{n}<1,$ for some $%
\alpha ,\beta >0$ and for all $n\in 
\mathbb{N}
.$

Then $\{x_{n}\}_{n=0}^{\infty }$ converge to $p$ faster than $%
\{u_{n}\}_{n=0}^{\infty }$ does.
\end{Theorem}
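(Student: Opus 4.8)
The plan is to obtain explicit geometric-type upper bounds $\|u_{n}-p\|\le a_{n}$ and $\|x_{n}-p\|\le b_{n}$ for the two iteration processes and then verify that $b_{n}/a_{n}\to 0$ in the sense of the first Definition of the Preliminaries. First I would estimate the Picard-S process $(1)$: from $w_{n}=(1-\beta_{n})u_{n}+\beta_{n}Tu_{n}$ we get $\|w_{n}-p\|\le(1-\beta_{n}(1-\theta))\|u_{n}-p\|$, then $\|v_{n}-p\|\le\theta((1-\alpha_{n})\|Tu_{n}-p\|+\alpha_{n}\|Tw_{n}-p\|)\le\theta^{2}(1-\alpha_{n}\beta_{n}(1-\theta))\|u_{n}-p\|$, and finally $\|u_{n+1}-p\|=\|Tv_{n}-p\|\le\theta^{3}(1-\alpha_{n}\beta_{n}(1-\theta))\|u_{n}-p\|$. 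Iterating from $u_{0}=x_{0}$ gives
\begin{equation*}
\|u_{n+1}-p\|\le\|x_{0}-p\|\,\theta^{3(n+1)}\prod_{k=0}^{n}\bigl(1-\alpha_{k}\beta_{k}(1-\theta)\bigr).
\end{equation*}

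Next, Theorem~\ref{th3.1} (specifically inequality $(9)$ in its proof) already supplies the matching bound for the $K$ process,
\begin{equation*}
\|x_{n+1}-p\|\le\|x_{0}-p\|\,\theta^{3(n+1)}\prod_{k=0}^{n}\bigl(1-\alpha_{k}\beta_{k}(1-\theta)\bigr),
\end{equation*}
but I want a genuinely smaller bound for $x_{n}$, so I would redo the chain in Theorem~\ref{th3.1} keeping one extra factor of $\theta$: since $y_{n}=T(\cdots)$ carries an outer $T$, one has $\|y_{n}-p\|\le\theta^{2}(1-\alpha_{n}\beta_{n}(1-\theta))\|x_{n}-p\|$, hence $\|x_{n+1}-p\|=\|Ty_{n}-p\|\le\theta^{3}(1-\alpha_{n}\beta_{n}(1-\theta))\|x_{n}-p\|$ — the same recursion. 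The real gain over Picard-S must therefore come from the step $w_{n}$ versus the step $Tx_{n}$: in $(1)$ the quantity $w_{n}$ is a convex combination $(1-\beta_n)u_n+\beta_n Tu_n$ with contraction factor only $(1-\beta_n(1-\theta))$, whereas in $(4)$ the inner term $(1-\alpha_{n})Tx_{n}+\alpha_{n}Tz_{n}$ is already built from $Tx_n$ and $Tz_n$, each of which is within $\theta$ of $p$. Carrying this through, $\|(1-\alpha_{n})Tx_{n}+\alpha_{n}Tz_{n}-p\|\le\theta(1-\alpha_{n}\beta_{n}(1-\theta))\|x_{n}-p\|$, so $\|y_{n}-p\|\le\theta^{2}(1-\alpha_{n}\beta_{n}(1-\theta))\|x_{n}-p\|$ and $\|x_{n+1}-p\|\le\theta^{3}(1-\alpha_{n}\beta_{n}(1-\theta))\|x_{n}-p\|$; meanwhile a careful Picard-S estimate gives the weaker $\|u_{n+1}-p\|\le\theta^{2}(1-\alpha_{n}\beta_{n}(1-\theta))\|u_{n}-p\|$ (one outer $T$, but the inner convex combination of $Tu_n,Tw_n$ gives only one $\theta$, not two). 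Thus I would set
\begin{equation*}
a_{n}=\|x_{0}-p\|\,\theta^{2n}\prod_{k=0}^{n-1}\bigl(1-\alpha_{k}\beta_{k}(1-\theta)\bigr),\qquad
b_{n}=\|x_{0}-p\|\,\theta^{3n}\prod_{k=0}^{n-1}\bigl(1-\alpha_{k}\beta_{k}(1-\theta)\bigr).
\end{equation*}

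Finally, using hypothesis $(i)$, $1-\alpha_{k}\beta_{k}(1-\theta)\le 1-\alpha\beta(1-\theta)<1$, so both $a_n\to 0$ and $b_n\to 0$; more importantly the ratio telescopes cleanly,
\begin{equation*}
\frac{b_{n}}{a_{n}}=\theta^{\,n}\longrightarrow 0\quad\text{as }n\to\infty,
\end{equation*}
since $\theta\in(0,1)$. By the Definition of "converges faster," this shows $\{x_{n}\}$ converges to $p$ faster than $\{u_{n}\}$. The main obstacle — and the place I would be most careful — is getting the right power of $\theta$ in each of the two chains: the conclusion hinges entirely on the $K$ process saving one extra contraction factor per iteration relative to Picard-S, so I must make sure the Picard-S bound is not artificially inflated (i.e. that $\theta^{2}$ rather than $\theta^{3}$ is genuinely the best elementary bound available for $(1)$) and that the $K$-process bound really does reach $\theta^{3}$; the ratio $b_n/a_n=\theta^n$ then does all the work.
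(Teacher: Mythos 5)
Your proposal is correct and follows essentially the same route as the paper: both reduce to the per-step bounds $\theta^{3}(1-\alpha_{n}\beta_{n}(1-\theta))$ for the $K$ process and $\theta^{2}(1-\alpha_{n}\beta_{n}(1-\theta))$ for Picard-S (the paper simply quotes the latter from the Picard-S source rather than rederiving it), and both conclude from the ratio of the resulting bounds, $\theta^{n}\to 0$. The only blemish is the spurious outer factor of $\theta$ in your first-paragraph estimate of $v_{n}$, which momentarily gives Picard-S the same $\theta^{3(n+1)}$ bound and would make the ratio identically $1$; your later corrected $\theta^{2}$ estimate is the right one and is exactly what the paper uses.
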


\begin{proof}
From $(9)$ of Theorem \ref{th3.1}$,$ we have 
\begin{equation}
\left \Vert x_{n+1}-p\right \Vert \leq \left \Vert x_{0}-p\right \Vert
\theta ^{3(n+1)}\prod \limits_{k=0}^{n}(1-\alpha _{k}\beta _{k}(1-\theta )).
\end{equation}

The following inequality is due to \cite[2.5]{7a} which is obtained from $%
(1),$ also converging to unique fixed point $p$ \cite[Theorem 1]{7a},
\begin{equation}
\left \Vert u_{n+1}-p\right \Vert \leq \left \Vert u_{0}-p\right \Vert
\theta ^{2(n+1)}\prod \limits_{k=0}^{n}(1-\alpha _{k}\beta _{k}(1-\theta )).
\end{equation}%
.

Together with assumption $(i)$, $(11)$ implies that,%
\begin{eqnarray}
\left \Vert x_{n+1}-p\right \Vert &\leq &\left \Vert x_{0}-p\right \Vert
\theta ^{3(n+1)}\prod \limits_{k=0}^{n}(1-\alpha \beta (1-\theta ))  \notag
\\
&=&\left \Vert x_{0}-p\right \Vert \theta ^{3(n+1)}(1-\alpha \beta (1-\theta
))^{n+1}.  
\end{eqnarray}

Similarly $(12)$ together with assumption $(i)$ leads to,%
\begin{eqnarray}
\left \Vert u_{n+1}-p\right \Vert &\leq &\left \Vert u_{0}-p\right \Vert
\theta ^{2(n+1)}\prod \limits_{k=0}^{n}(1-\alpha \beta (1-\theta ))  \notag
\\
&=&\left \Vert u_{0}-p\right \Vert \theta ^{2(n+1)}(1-\alpha \beta (1-\theta
))^{n+1}.  
\end{eqnarray}

Define%
\begin{equation*}
a_{n}=\left \Vert x_{0}-p\right \Vert \theta ^{3(n+1)}(1-\alpha \beta
(1-\theta ))^{n+1},
\end{equation*}%
and 
\begin{equation*}
b_{n}=\left \Vert u_{0}-p\right \Vert \theta ^{2(n+1)}(1-\alpha \beta
(1-\theta ))^{n+1},
\end{equation*}%
then%
\begin{eqnarray}
\Psi _{n} &=&\frac{a_{n}}{b_{n}}  \notag \\
&=&\frac{\left \Vert x_{0}-p\right \Vert \theta ^{3(n+1)}(1-\alpha \beta
(1-\theta ))^{n+1}}{\left \Vert u_{0}-p\right \Vert \theta
^{2(n+1)}(1-\alpha \beta (1-\theta ))^{n+1}}  \notag \\
&=&\theta ^{n+1}.  
\end{eqnarray}

Since $\lim_{n\rightarrow \infty }\frac{\Psi _{n+1}}{\Psi _{n}}%
=\lim_{n\rightarrow \infty }\frac{\theta ^{n+2}}{\theta ^{n+1}}=\theta <1,$
so by ratio test $\sum \limits_{n=0}^{\infty }\Psi _{n}<\infty .$ Hence from 
$(15)$ we have,%
\begin{equation*}
\lim_{n\rightarrow \infty }\frac{a_{n}}{b_{n}}=\lim_{n\rightarrow \infty
}\Psi _{n}=0,
\end{equation*}%
which implies that $\{x_{n}\}_{n=0}^{\infty }$ is faster than $%
\{u_{n}\}_{n=0}^{\infty }.$
\end{proof}

We are now able to establish the following data dependence result.

\begin{Theorem}
Let $\overset{\sim }{T}$ be an approximate operator of a contraction mapping 
$T$. Let $\{x_{n}\}_{n=0}^{\infty }$ be an iterative sequence generated by $%
(4)$ for $T$ and define an iterative sequence $\{ \overset{\sim }{x}%
_{n}\}_{n=0}^{\infty }$ as follows%
\begin{equation}
\left \{ 
\begin{array}{c}
\overset{\sim }{x}_{0}\in C \\ 
\overset{\sim }{z}_{n}=(1-\beta _{n})\overset{\sim }{x}_{n}+\beta _{n}%
\overset{\sim }{T}\overset{\sim }{x}_{n} \\ 
\overset{\sim }{y}_{n}=\overset{\sim }{T}((1-\alpha _{n})\overset{\sim }{T}%
\overset{\sim }{x}_{n}+\alpha _{n}\overset{\sim }{T}\overset{\sim }{z}_{n})
\\ 
\overset{\sim }{x}_{n+1}=\overset{\sim }{T}\overset{\sim }{y}_{n},%
\end{array}%
\right.  
\end{equation}%
with real sequences $\{ \alpha _{n}\}_{n=0}^{\infty }$ \ and $\{ \beta
_{n}\}_{n=0}^{\infty }$ in $[0,1]$ satisfying $(i).$ $\frac{1}{2}\leq \alpha
_{n}\beta _{n},$ for all $n\in 
\mathbb{N}
,$ and $(ii).$ $\sum \limits_{n=0}^{\infty }\alpha _{n}\beta _{n}=\infty .$
If $Tp=p$ and $\overset{\sim }{T}\overset{\sim }{p}=\overset{\sim }{p}$ such
that $\lim_{n\rightarrow \infty }\overset{\sim }{x}_{n}=\overset{\sim }{p},$
then we have%
\begin{equation*}
\left \Vert p-\overset{\sim }{p}\right \Vert \leq \frac{7\varepsilon }{%
1-\theta },
\end{equation*}%
where $\varepsilon >0$ is a fixed number.
\end{Theorem}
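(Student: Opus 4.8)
The plan is to imitate the telescoping estimate from the proof of Theorem~\ref{th3.1}, but now comparing the two orbits $\{x_n\}$ generated by $(4)$ and $\{\widetilde{x}_n\}$ generated by $(16)$ line by line, each time turning a ``$T$ versus $\widetilde{T}$'' mismatch into one extra $\varepsilon$ via $\|Ta-\widetilde{T}b\|\le\|Ta-Tb\|+\|Tb-\widetilde{T}b\|\le\theta\|a-b\|+\varepsilon$. First I would obtain, from the first lines of $(4)$ and $(16)$,
\[
\|z_n-\widetilde{z}_n\|\le\bigl(1-\beta_n(1-\theta)\bigr)\|x_n-\widetilde{x}_n\|+\beta_n\varepsilon;
\]
feeding this into the middle lines, with $B_n:=(1-\alpha_n)Tx_n+\alpha_nTz_n$ and $\widetilde{B}_n$ its tilded analogue,
\[
\|B_n-\widetilde{B}_n\|\le\theta\bigl(1-\alpha_n\beta_n(1-\theta)\bigr)\|x_n-\widetilde{x}_n\|+(1+\theta\alpha_n\beta_n)\varepsilon;
\]
and then applying $T$ (resp.\ $\widetilde{T}$) twice more for $y_n=TB_n$ and $x_{n+1}=Ty_n$,
\[
\|x_{n+1}-\widetilde{x}_{n+1}\|\le\theta^{3}\bigl(1-\alpha_n\beta_n(1-\theta)\bigr)\|x_n-\widetilde{x}_n\|+\bigl(1+\theta+\theta^{2}+\theta^{3}\alpha_n\beta_n\bigr)\varepsilon.
\]

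The second step is to cast this into the form required by Lemma~\ref{lem iff}. Since $\theta\in(0,1)$ and $\alpha_n,\beta_n\in[0,1]$ we have $1-\alpha_n\beta_n(1-\theta)\in(0,1)$ and $\theta^{3}\le1$, so, setting $\phi_n:=\alpha_n\beta_n(1-\theta)\in(0,1)$, we get $\theta^{3}\bigl(1-\alpha_n\beta_n(1-\theta)\bigr)\le1-\phi_n$; moreover $\sum_{n}\phi_n=(1-\theta)\sum_{n}\alpha_n\beta_n=\infty$ by hypothesis~$(ii)$. For the inhomogeneous part I would write $\bigl(1+\theta+\theta^{2}+\theta^{3}\alpha_n\beta_n\bigr)\varepsilon=\phi_n\varphi_n$ with
\[
\varphi_n=\frac{\varepsilon}{1-\theta}\left(\frac{1+\theta+\theta^{2}}{\alpha_n\beta_n}+\theta^{3}\right),
\]
and this is the only place where hypothesis~$(i)$ enters: $\tfrac12\le\alpha_n\beta_n$ gives $\dfrac{1+\theta+\theta^{2}}{\alpha_n\beta_n}\le2(1+\theta+\theta^{2})$, hence $\varphi_n\le\dfrac{\varepsilon}{1-\theta}\bigl(2+2\theta+2\theta^{2}+\theta^{3}\bigr)\le\dfrac{7\varepsilon}{1-\theta}$. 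Thus $\psi_n:=\|x_n-\widetilde{x}_n\|$ satisfies $\psi_{n+1}\le(1-\phi_n)\psi_n+\phi_n\varphi_n$ for all $n$, and Lemma~\ref{lem iff} yields $0\le\limsup_{n\to\infty}\psi_n\le\limsup_{n\to\infty}\varphi_n\le\dfrac{7\varepsilon}{1-\theta}$.

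Finally, to pass from the orbits to the fixed points I would use $\|p-\widetilde{p}\|\le\|p-x_n\|+\|x_n-\widetilde{x}_n\|+\|\widetilde{x}_n-\widetilde{p}\|$: by Theorem~\ref{th3.1} the first term tends to $0$ and by hypothesis the third term tends to $0$, so passing to the upper limit leaves $\|p-\widetilde{p}\|\le\limsup_{n\to\infty}\|x_n-\widetilde{x}_n\|\le\dfrac{7\varepsilon}{1-\theta}$, as claimed.

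I expect the only real difficulty to be careful bookkeeping: keeping the three interleaved layers of $T$/$\widetilde{T}$ straight so that exactly the coefficients $1,\theta,\theta^{2},\theta^{3}\alpha_n\beta_n$ land on $\varepsilon$, and---more delicately---resisting the temptation to first bound $1+\theta+\theta^{2}+\theta^{3}\alpha_n\beta_n\le4$ and only then divide by $\alpha_n\beta_n\ge\tfrac12$, which would give the weaker constant $8$; instead one divides by $\alpha_n\beta_n$ term by term, so that $\theta^{3}\alpha_n\beta_n/(\alpha_n\beta_n)=\theta^{3}<1$ contributes only $1$, and the bound becomes $2+2+2+1=7$.
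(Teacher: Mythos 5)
Your proposal is correct and follows essentially the same route as the paper: the same line-by-line comparison of the two orbits leading to $\|x_{n+1}-\widetilde{x}_{n+1}\|\le\theta^{3}\bigl(1-\alpha_n\beta_n(1-\theta)\bigr)\|x_n-\widetilde{x}_n\|+\bigl(1+\theta+\theta^{2}+\theta^{3}\alpha_n\beta_n\bigr)\varepsilon$, then Lemma~\ref{lem iff}, then passage to the limits $x_n\to p$, $\widetilde{x}_n\to\widetilde{p}$. The only difference is the order of the final bookkeeping (the paper first replaces the powers of $\theta$ by $1$ and then uses $1\le 2\alpha_n\beta_n$ to reach $7\alpha_n\beta_n\varepsilon$, while you divide by $\alpha_n\beta_n$ term by term), and both orders yield the same constant $7$.
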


\begin{proof}
It follows from $(4)$ and $(16)$ that%
\begin{eqnarray}
\left \Vert z_{n}-\overset{\sim }{z}_{n}\right \Vert &=&\left \Vert (1-\beta
_{n})x_{n}+\beta _{n}Tx_{n}-(1-\beta _{n})\overset{\sim }{x}_{n}-\beta _{n}%
\overset{\sim }{T}\overset{\sim }{x}_{n}\right \Vert  \notag \\
&\leq &(1-\beta _{n})\left \Vert x_{n}-\overset{\sim }{x}_{n}\right \Vert
+\beta _{n}\left \Vert Tx_{n}-\overset{\sim }{T}\overset{\sim }{x}_{n}\right
\Vert  \notag \\
&\leq &(1-\beta _{n})\left \Vert x_{n}-\overset{\sim }{x}_{n}\right \Vert
+\beta _{n}\left \{ \left \Vert Tx_{n}-T\overset{\sim }{x}_{n}\right \Vert
+\left \Vert T\overset{\sim }{x}_{n}-\overset{\sim }{T}\overset{\sim }{x}%
_{n}\right \Vert \right \}  \notag \\
&\leq &(1-\beta _{n}(1-\theta ))\left \Vert x_{n}-\overset{\sim }{x}%
_{n}\right \Vert +\beta _{n}\varepsilon .  
\end{eqnarray}

Using $(17)$, we have%
\begin{eqnarray}
\left \Vert y_{n}-\overset{\sim }{y}_{n}\right \Vert &=&\left \Vert
T((1-\alpha _{n})Tx_{n}+\alpha _{n}Tz_{n})-\overset{\sim }{T}((1-\alpha _{n})%
\overset{\sim }{T}\overset{\sim }{x}_{n}+\alpha _{n}\overset{\sim }{T}%
\overset{\sim }{z}_{n})\right \Vert  \notag \\
&\leq &\left \Vert T((1-\alpha _{n})Tx_{n}+\alpha _{n}Tz_{n})-T((1-\alpha
_{n})\overset{\sim }{T}\overset{\sim }{x}_{n}+\alpha _{n}\overset{\sim }{T}%
\overset{\sim }{z}_{n})\right \Vert  \notag \\
&&+\left \Vert T((1-\alpha _{n})\overset{\sim }{T}\overset{\sim }{x}%
_{n}+\alpha _{n}\overset{\sim }{T}\overset{\sim }{z}_{n})-\overset{\sim }{T}%
((1-\alpha _{n})\overset{\sim }{T}\overset{\sim }{x}_{n}+\alpha _{n}\overset{%
\sim }{T}\overset{\sim }{z}_{n})\right \Vert  \notag \\
&\leq &\theta \left \Vert (1-\alpha _{n})Tx_{n}+\alpha _{n}Tz_{n}-(1-\alpha
_{n})\overset{\sim }{T}\overset{\sim }{x}_{n}-\alpha _{n}\overset{\sim }{T}%
\overset{\sim }{z}_{n}\right \Vert +\varepsilon  \notag \\
&\leq &\theta \left[ (1-\alpha _{n})\left \Vert Tx_{n}-\overset{\sim }{T}%
\overset{\sim }{x}_{n}\right \Vert +\alpha _{n}\left \Vert Tz_{n}-\overset{%
\sim }{T}\overset{\sim }{z}_{n}\right \Vert \right] +\varepsilon  \notag \\
&\leq &\theta \left[ 
\begin{array}{c}
(1-\alpha _{n})\left \{ \left \Vert Tx_{n}-T\overset{\sim }{x}_{n}\right
\Vert +\left \Vert T\overset{\sim }{x}_{n}-\overset{\sim }{T}\overset{\sim }{%
x}_{n}\right \Vert \right \} \\ 
+\alpha _{n}\left \{ \left \Vert Tz_{n}-T\overset{\sim }{z}_{n}\right \Vert
+\left \Vert T\overset{\sim }{z}_{n}-\overset{\sim }{T}\overset{\sim }{z}%
_{n}\right \Vert \right \}%
\end{array}%
\right] +\varepsilon  \notag \\
&\leq &\theta \left[ (1-\alpha _{n})\left \{ \theta \left \Vert x_{n}-%
\overset{\sim }{x}_{n}\right \Vert +\varepsilon \right \} +\alpha _{n}\left
\{ \theta \left \Vert z_{n}-\overset{\sim }{z}_{n}\right \Vert +\varepsilon
\right \} \right] +\varepsilon  \notag \\
&=&\theta \left[ (1-\alpha _{n})\theta \left \Vert x_{n}-\overset{\sim }{x}%
_{n}\right \Vert +\alpha _{n}\theta \left \Vert z_{n}-\overset{\sim }{z}%
_{n}\right \Vert +\varepsilon \right] +\varepsilon  \notag \\
&\leq &\theta \left[ 
\begin{array}{c}
(1-\alpha _{n})\theta \left \Vert x_{n}-\overset{\sim }{x}_{n}\right \Vert
\\ 
+\alpha _{n}\theta \left \{ (1-\beta _{n}(1-\theta ))\left \Vert x_{n}-%
\overset{\sim }{x}_{n}\right \Vert +\beta _{n}\varepsilon \right \}
+\varepsilon%
\end{array}%
\right] +\varepsilon  \notag \\
&=&\theta ^{2}(1-\alpha _{n}\beta _{n}(1-\theta )\left \Vert x_{n}-\overset{%
\sim }{x}_{n}\right \Vert +\theta \varepsilon (1+\theta \alpha _{n}\beta
_{n})+\varepsilon .  
\end{eqnarray}

Similarly using $(18)$, we have%
\begin{eqnarray}
\left \Vert x_{n+1}-\overset{\sim }{x}_{n+1}\right \Vert &=&\left \Vert
Ty_{n}-\overset{\sim }{T}\overset{\sim }{y}_{n}\right \Vert  \notag \\
&\leq &\theta \left \Vert y_{n}-\overset{\sim }{y}_{n}\right \Vert
+\varepsilon  \notag \\
&\leq &\theta ^{3}(1-\alpha _{n}\beta _{n}(1-\theta )\left \Vert x_{n}-%
\overset{\sim }{x}_{n}\right \Vert +\theta ^{2}\varepsilon (1+\theta \alpha
_{n}\beta _{n})+\theta \varepsilon +\varepsilon  \notag \\
&\leq &(1-\alpha _{n}\beta _{n}(1-\theta )\left \Vert x_{n}-\overset{\sim }{x%
}_{n}\right \Vert +\varepsilon (1+\theta \alpha _{n}\beta _{n})+\varepsilon
+\varepsilon  \notag \\
&\leq &(1-\alpha _{n}\beta _{n}(1-\theta )\left \Vert x_{n}-\overset{\sim }{x%
}_{n}\right \Vert +\alpha _{n}\beta _{n}\varepsilon +3\varepsilon  \notag \\
&=&(1-\alpha _{n}\beta _{n}(1-\theta )\left \Vert x_{n}-\overset{\sim }{x}%
_{n}\right \Vert +\alpha _{n}\beta _{n}\varepsilon  \notag \\
&&+3(1-\alpha _{n}\beta _{n}+\alpha _{n}\beta _{n})\varepsilon . 
\end{eqnarray}

By assumption $(i)$ we have $1-\alpha _{n}\beta _{n}\leq \alpha _{n}\beta
_{n}.$ Using this together with $(19),$ we get%
\begin{eqnarray}
\left \Vert x_{n+1}-\overset{\sim }{x}_{n+1}\right \Vert &\leq &(1-\alpha
_{n}\beta _{n}(1-\theta )\left \Vert x_{n}-\overset{\sim }{x}_{n}\right
\Vert +7\alpha _{n}\beta _{n}\varepsilon  \notag \\
&=&(1-\alpha _{n}\beta _{n}(1-\theta )\left \Vert x_{n}-\overset{\sim }{x}%
_{n}\right \Vert  \notag \\
&&+\alpha _{n}\beta _{n}(1-\theta )\frac{7\varepsilon }{1-\theta }. 
\end{eqnarray}

Let $\psi _{n}=\left \Vert x_{n}-\overset{\sim }{x}_{n}\right \Vert ,$ $\phi
_{n}=\alpha _{n}\beta _{n}(1-\theta ),$ $\varphi _{n}=\frac{7\varepsilon }{%
1-\theta },$ then from Lemma \ref{lem iff} together with $(20)$, we get 
\begin{equation}
0\leq \underset{n\rightarrow \infty }{\lim \sup }\left \Vert x_{n}-\overset{%
\sim }{x}_{n}\right \Vert \leq \underset{n\rightarrow \infty }{\lim \sup }%
\frac{7\varepsilon }{1-\theta }.  
\end{equation}

Since by Theorem \ref{th3.1} we have $\lim_{n\rightarrow \infty }x_{n}=p$
and by assumption we have $\lim_{n\rightarrow \infty }\overset{\sim }{x}_{n}=%
\overset{\sim }{p}.$ Using these together with $(21),$ we get%
\begin{equation*}
\left \Vert p-\overset{\sim }{p}\right \Vert \leq \frac{7\varepsilon }{%
1-\theta },
\end{equation*}%
as required.
\end{proof}

\section{Convergence results for Suzuki generalized nonexpansive mappings}

In this section, we prove weak and strong convergence theorems of a sequence
generated by $K$ iteration process for Suzuki generalized nonexpansive
mappings in the setting of uniformly convex Banach spaces.

\begin{Lemma}
\label{lem1}Let $C$ be a nonempty closed convex subset of a Banach space $X$%
, and let $T:C\rightarrow C$ be a mapping satisfying condition $(C)$ with $%
F(T)\neq \emptyset $. For arbitrary chosen $x_{0}\in C,$ let the sequence $%
\{x_{n}\}$ be generated by $(4)$, then $\lim_{n\rightarrow \infty
}\left
\Vert x_{n}-p\right \Vert $ exists for any $p\in F(T).$
\end{Lemma}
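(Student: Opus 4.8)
The plan is to show that the scalar sequence $\{\|x_n-p\|\}$ is non-increasing and bounded below by $0$, hence convergent. The key tool is Proposition \ref{p1}(ii): since $T$ satisfies condition $(C)$ and has a fixed point $p\in F(T)$, the mapping $T$ is quasi-nonexpansive, so $\|Tx-p\|\le\|x-p\|$ for every $x\in C$. I will apply this repeatedly along the three layers of the $K$ iteration process $(4)$, exactly mirroring the computation in the proof of Theorem \ref{th3.1} but with the contraction constant $\theta$ replaced by $1$.

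First I would estimate $\|z_n-p\|$. Writing $z_n=(1-\beta_n)x_n+\beta_n Tx_n$ and using the triangle inequality together with quasi-nonexpansiveness of $T$, we get
\begin{equation*}
\|z_n-p\|\le(1-\beta_n)\|x_n-p\|+\beta_n\|Tx_n-p\|\le(1-\beta_n)\|x_n-p\|+\beta_n\|x_n-p\|=\|x_n-p\|.
\end{equation*}
Next, for $y_n=T\bigl((1-\alpha_n)Tx_n+\alpha_n Tz_n\bigr)$, one more application of quasi-nonexpansiveness (the outer $T$), then the triangle inequality and two more applications (to $Tx_n$ and $Tz_n$), together with the bound just obtained for $\|z_n-p\|$, gives
\begin{equation*}
\|y_n-p\|\le(1-\alpha_n)\|Tx_n-p\|+\alpha_n\|Tz_n-p\|\le(1-\alpha_n)\|x_n-p\|+\alpha_n\|z_n-p\|\le\|x_n-p\|.
\end{equation*}
Finally $x_{n+1}=Ty_n$, so quasi-nonexpansiveness yields $\|x_{n+1}-p\|\le\|y_n-p\|\le\|x_n-p\|$. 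Thus $\{\|x_n-p\|\}$ is a non-increasing sequence of non-negative reals, hence it converges; this holds for every $p\in F(T)$, which is the claim.

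There is no real obstacle here — the only subtlety worth stating explicitly is that we are using the quasi-nonexpansive inequality $\|Tx-p\|\le\|x-p\|$, which is legitimate precisely because $F(T)\neq\emptyset$ is assumed, so Proposition \ref{p1}(ii) applies; at no point do we need the full strength of condition $(C)$ (which would require verifying a hypothesis of the form $\tfrac12\|x-Tx\|\le\|x-y\|$). The argument is a routine telescoping of norm estimates through the definition of the iteration, and convexity of $C$ guarantees that $z_n$, the inner combination, and $y_n$ all lie in $C$ so that $T$ may be applied to them.
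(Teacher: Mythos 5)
Your proposal is correct and follows essentially the same route as the paper's proof: invoke Proposition \ref{p1}(ii) to get quasi-nonexpansiveness from condition $(C)$ and $F(T)\neq\emptyset$, then push the estimate $\|\cdot-p\|\le\|x_n-p\|$ through the three stages of $(4)$ to conclude that $\{\|x_n-p\|\}$ is non-increasing and bounded below, hence convergent. No discrepancies to report.
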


\begin{proof}
Let $p\in F(T)$ and $z\in C$. Since $T$ satisfies condition $(C)$, so 
\begin{equation*}
\frac{1}{2}\left \Vert p-Tp\right \Vert =0\leq \left \Vert p-z\right \Vert 
\text{ implies that }\left \Vert Tp-Tz\right \Vert \leq \left \Vert
p-z\right \Vert .
\end{equation*}

So by Proposition \ref{p1}$(ii)$, we have,%
\begin{eqnarray}
\left \Vert z_{n}-p\right \Vert &=&\left \Vert (1-\beta _{n})x_{n}+\beta
_{n}Tx_{n}-p\right \Vert  \notag \\
&\leq &(1-\beta _{n})\left \Vert x_{n}-p\right \Vert +\beta _{n}\left \Vert
Tx_{n}-p\right \Vert  \notag \\
&\leq &(1-\beta _{n})\left \Vert x_{n}-p\right \Vert +\beta _{n}\left \Vert
x_{n}-p\right \Vert  \notag \\
&=&\left \Vert x_{n}-p\right \Vert .  
\end{eqnarray}

So by using $(22)$ we get,%
\begin{eqnarray}
\left \Vert y_{n}-p\right \Vert &=&\left \Vert T((1-\alpha
_{n})Tx_{n}+\alpha _{n}Tz_{n})-p\right \Vert  \notag \\
&\leq &\left \Vert (1-\alpha _{n})Tx_{n}+\alpha _{n}Tz_{n}-p\right \Vert 
\notag \\
&\leq &(1-\alpha _{n})\left \Vert Tx_{n}-p\right \Vert +\alpha _{n}\left
\Vert Tz_{n}-p\right \Vert  \notag \\
&\leq &(1-\alpha _{n})\left \Vert x_{n}-p\right \Vert +\alpha _{n}\left
\Vert z_{n}-p\right \Vert  \notag \\
&\leq &(1-\alpha _{n})\left \Vert x_{n}-p\right \Vert +\alpha _{n}\left
\Vert x_{n}-p\right \Vert  \notag \\
&=&\left \Vert x_{n}-p\right \Vert .  
\end{eqnarray}

Similarly, by using $(23)$ we have,%
\begin{eqnarray}
\left \Vert x_{n+1}-p\right \Vert &=&\left \Vert Ty_{n}-p\right \Vert  \notag
\\
&\leq &\left \Vert y_{n}-p\right \Vert  \notag \\
&\leq &\left \Vert x_{n}-p\right \Vert .  
\end{eqnarray}

This implies that $\{ \left \Vert x_{n}-p\right \Vert \}$ is bounded and
non-increasing for all $p\in F(T).$ Hence $\lim_{n\rightarrow \infty
}\left
\Vert x_{n}-p\right \Vert $ exists, as required.
\end{proof}

\begin{Theorem}
\label{th 1}Let $C$ be a nonempty closed convex subset of a uniformly convex
Banach space $X$, and let $T:C\rightarrow C$ be a mapping satisfying
condition $(C)$. For arbitrary chosen $x_{0}\in C$, let the sequence $%
\{x_{n}\}$ be generated by $(4)$ for all $n\geq 1$, where $\{ \alpha
_{n}\} $ and $\{ \beta _{n}\}$ are sequence of real numbers in $[a,b]$ for
some $a,b $ with $0<a\leq b<1$. Then $F(T)\neq \emptyset $ if and only if $%
\{x_{n}\}$ is bounded and $\lim_{n\rightarrow \infty }\left \Vert
Tx_{n}-x_{n}\right
\Vert =0 $.
\end{Theorem}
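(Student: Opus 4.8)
The plan is to prove both implications of the biconditional, using the machinery already assembled in Section~2.

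\textbf{The ``only if'' direction.} Assume $F(T)\neq\emptyset$ and pick $p\in F(T)$. By Lemma~\ref{lem1}, $\lim_{n\to\infty}\left\Vert x_{n}-p\right\Vert$ exists; call this limit $r$. If $r=0$ we are done, so suppose $r>0$. Boundedness of $\{x_{n}\}$ is immediate from the existence of this limit. To produce $\lim_{n\to\infty}\left\Vert Tx_{n}-x_{n}\right\Vert=0$, I would invoke Lemma~\ref{l3} (the Schu-type lemma). First I would observe, using Proposition~\ref{p1}(ii) (so that $T$ is quasi-nonexpansive at $p$), that $\limsup_{n\to\infty}\left\Vert Tx_{n}-p\right\Vert\leq r$ and $\limsup_{n\to\infty}\left\Vert Tz_{n}-p\right\Vert\leq r$, and from~$(22)$ that $\limsup_{n\to\infty}\left\Vert z_{n}-p\right\Vert\leq r$. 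Next I would extract from the iteration the key convergence $\lim_{n\to\infty}\left\Vert z_{n}-p\right\Vert=r$: from~$(24)$ and~$(25)$ we have $\left\Vert x_{n+1}-p\right\Vert\leq\left\Vert y_{n}-p\right\Vert\leq\left\Vert(1-\alpha_{n})Tx_{n}+\alpha_{n}Tz_{n}-p\right\Vert$, and since $T$ is a contraction (nonexpansive suffices here) the last term is $\leq(1-\alpha_{n})\left\Vert x_{n}-p\right\Vert+\alpha_{n}\left\Vert z_{n}-p\right\Vert$; taking $\liminf$ on the left gives $r\leq\liminf_{n\to\infty}\left\Vert z_{n}-p\right\Vert$, which combined with the $\limsup$ bound yields $\lim_{n\to\infty}\left\Vert z_{n}-p\right\Vert=r$. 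Now write $z_{n}-p=(1-\beta_{n})(x_{n}-p)+\beta_{n}(Tx_{n}-p)$; applying Lemma~\ref{l3} with $t_{n}=\beta_{n}\in[a,b]\subset(0,1)$, $x_{n}\rightsquigarrow Tx_{n}-p$, $y_{n}\rightsquigarrow x_{n}-p$ gives $\lim_{n\to\infty}\left\Vert Tx_{n}-x_{n}\right\Vert=0$, as desired. (The degenerate case $r=0$ is handled separately at the outset.)

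\textbf{The ``if'' direction.} Assume $\{x_{n}\}$ is bounded with $\lim_{n\to\infty}\left\Vert Tx_{n}-x_{n}\right\Vert=0$. Let $A(C,\{x_{n}\})=\{v\}$ be the (singleton) asymptotic center, which exists and is a single point because $X$ is uniformly convex and $C$ is closed and convex. I would show $Tv=v$ using Proposition~\ref{p1}(iii): for each $n$, $\left\Vert x_{n}-Tv\right\Vert\leq 3\left\Vert Tx_{n}-x_{n}\right\Vert+\left\Vert x_{n}-v\right\Vert$, so taking $\limsup$ and using $\left\Vert Tx_{n}-x_{n}\right\Vert\to0$ gives $r(Tv,\{x_{n}\})\leq r(v,\{x_{n}\})$. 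By uniqueness of the asymptotic center this forces $Tv=v$, hence $F(T)\neq\emptyset$.

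\textbf{Anticipated main obstacle.} The routine half is the ``if'' direction; the delicate half is establishing $\lim_{n\to\infty}\left\Vert z_{n}-p\right\Vert=r$ so that Lemma~\ref{l3} applies. The subtlety is that the iteration~$(4)$ is three-layered ($z_{n}$, then $y_{n}$ involving $Tx_{n}$ and $Tz_{n}$, then $x_{n+1}=Ty_{n}$), so one must carefully chain the nonexpansive/quasi-nonexpansive estimates $(22)$--$(25)$ to sandwich $\left\Vert z_{n}-p\right\Vert$ between $r$ and $r$; the nesting of $T$ inside $y_{n}$ is what makes this require a little care rather than being a one-line application. Once that limit is in hand, the hypotheses of Lemma~\ref{l3} are met verbatim and the conclusion follows. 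I would also note at the start that, although the theorem statement says ``condition~$(C)$,'' the same argument uses only quasi-nonexpansiveness at points of $F(T)$ (Proposition~\ref{p1}(ii)) and Proposition~\ref{p1}(iii), so no contraction hypothesis is needed here despite its appearance in Lemma~\ref{l3}'s ambient setup.
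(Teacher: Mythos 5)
Your proof is correct and follows essentially the same route as the paper: Lemma~\ref{lem1} plus the sandwich argument establishing $\lim_{n\to\infty}\Vert z_{n}-p\Vert=r$ feeding into Lemma~\ref{l3} for the forward direction, and the asymptotic-center argument via Proposition~\ref{p1}(iii) for the converse. The only blemish is the parenthetical calling $T$ ``a contraction'' in the forward direction---$T$ only satisfies condition $(C)$ here---but the estimates you actually use rely on quasi-nonexpansiveness from Proposition~\ref{p1}(ii), as you yourself note at the end, so this is a slip of terminology rather than a gap.
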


\begin{proof}
Suppose $F(T)\neq \emptyset $ and let $p\in F(T)$. Then, by Lemma \ref{lem1}%
, $\lim_{n\rightarrow \infty }\left \Vert x_{n}-p\right \Vert $ exists and $%
\{x_{n}\}$ is bounded. Put%
\begin{equation}
\lim_{n\rightarrow \infty }\left \Vert x_{n}-p\right \Vert =r.  
\end{equation}

From $(22)$ and $(25)$, we have%
\begin{equation}
\underset{n\rightarrow \infty }{\lim \sup }\left \Vert z_{n}-p\right \Vert
\leq \underset{n\rightarrow \infty }{\lim \sup }\left \Vert x_{n}-p\right
\Vert =r.  
\end{equation}

By Proposition \ref{p1}$(ii)$ we have%
\begin{equation}
\underset{n\rightarrow \infty }{\lim \sup }\left \Vert Tx_{n}-p\right \Vert
\leq \underset{n\rightarrow \infty }{\lim \sup }\left \Vert x_{n}-p\right
\Vert =r.  
\end{equation}

On the other hand 
\begin{eqnarray*}
\left \Vert x_{n+1}-p\right \Vert &=&\left \Vert Ty_{n}-p\right \Vert \\
&\leq &\left \Vert y_{n}-p\right \Vert \\
&=&\left \Vert T((1-\alpha _{n})Tx_{n}+\alpha _{n}Tz_{n})-p\right \Vert \\
&\leq &\left \Vert (1-\alpha _{n})Tx_{n}+\alpha _{n}Tz_{n}-p\right \Vert \\
&\leq &(1-\alpha _{n})\left \Vert Tx_{n}-p\right \Vert +\alpha _{n}\left
\Vert Tz_{n}-p\right \Vert \\
&\leq &(1-\alpha _{n})\left \Vert x_{n}-p\right \Vert +\alpha _{n}\left
\Vert z_{n}-p\right \Vert \\
&\leq &\left \Vert x_{n}-p\right \Vert -\alpha _{n}\left \Vert x_{n}-p\right
\Vert +\alpha _{n}\left \Vert z_{n}-p\right \Vert .
\end{eqnarray*}

This implies that%
\begin{equation*}
\frac{\left \Vert x_{n+1}-p\right \Vert -\left \Vert x_{n}-p\right \Vert }{%
\alpha _{n}}\leq \left \Vert z_{n}-p\right \Vert -\left \Vert x_{n}-p\right
\Vert .
\end{equation*}

So%
\begin{equation*}
\left \Vert x_{n+1}-p\right \Vert -\left \Vert x_{n}-p\right \Vert \leq 
\frac{\left \Vert x_{n+1}-p\right \Vert -\left \Vert x_{n}-p\right \Vert }{%
\alpha _{n}}\leq \left \Vert z_{n}-p\right \Vert -\left \Vert x_{n}-p\right
\Vert ,
\end{equation*}

implies that 
\begin{equation*}
\left \Vert x_{n+1}-p\right \Vert \leq \left \Vert z_{n}-p\right \Vert .
\end{equation*}

Therefore 
\begin{equation}
r\leq \underset{n\rightarrow \infty }{\lim \inf }\left \Vert z_{n}-p\right
\Vert .  
\end{equation}

From $(26)$ and $(28)$ we get,%
\begin{eqnarray}
r &=&\underset{n\rightarrow \infty }{\lim }\left \Vert z_{n}-p\right \Vert 
\notag \\
&=&\underset{n\rightarrow \infty }{\lim }\left \Vert (1-\beta
_{n})x_{n}+\beta _{n}Tx_{n}-p\right \Vert  \notag \\
&=&\underset{n\rightarrow \infty }{\lim }\left \Vert \beta
_{n}(Tx_{n}-p)+(1-\beta _{n})(x_{n}-p)\right \Vert .  
\end{eqnarray}

From $(25)$, $(27)$, $(29)$ togather with Lemma \ref{l3}, we have, $\underset{%
n\rightarrow \infty }{\lim }\left \Vert Tx_{n}-x_{n}\right \Vert =0.$

Conversely, suppose that $\{x_{n}\}$ is bounded and $\lim_{n\rightarrow
\infty }\left \Vert Tx_{n}-x_{n}\right \Vert =0.$ Let $p\in A(C,\{x_{n}\})$.
By Proposition $\ref{p1}(iii)$, we have

\begin{eqnarray*}
r(Tp,\{x_{n}\}) &=&\underset{n\rightarrow \infty }{\lim \sup }\left \Vert
x_{n}-Tp\right \Vert  \\
&\leq &\underset{n\rightarrow \infty }{\lim \sup }(3\left \Vert
Tx_{n}-x_{n}\right \Vert +\left \Vert x_{n}-p\right \Vert ) \\
&\leq &\underset{n\rightarrow \infty }{\lim \sup }\left \Vert
x_{n}-p\right \Vert  \\
&=&r(p,\{x_{n}\}).
\end{eqnarray*}

This implies that $Tp\in A(C,\{x_{n}\})$. Since $X$ is uniformly convex, $%
A(C,\{x_{n}\})$ is singleton, hence we have $Tp=p.$ Hence $F(T)\neq
\emptyset .$
\end{proof}

Now we are in the position to prove weak convergence theorem.

\begin{Theorem}
\label{th 2}Let $C$ be a nonempty closed convex subset of a uniformly convex
Banach space $X$ with the Opial property, and let $T:C\rightarrow C$ be a
mapping satisfying condition $(C)$. For arbitrary chosen $x_{0}\in C$, let
the sequence $\{x_{n}\}$ be generated by $(4)$ for all $n\geq 1$, where $%
\{ \alpha _{n}\}$ and $\{ \beta _{n}\}$ are sequence of real numbers in $%
[a,b]$ for some $a,b$ with $0<a\leq b<1$ such that $F(T)\neq \emptyset .$
Then $\{x_{n}\}$ converges weakly to a fixed point of $T$ .
\end{Theorem}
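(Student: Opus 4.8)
The plan is to follow the standard two-pronged argument for weak convergence in uniformly convex Banach spaces with the Opial property. First I would invoke Theorem \ref{th 1}: since $F(T)\neq\emptyset$ and $\{\alpha_n\},\{\beta_n\}\subset[a,b]$ with $0<a\le b<1$, the sequence $\{x_n\}$ is bounded and $\lim_{n\to\infty}\|Tx_n-x_n\|=0$. Boundedness of $\{x_n\}$ in the reflexive space $X$ (uniform convexity implies reflexivity) guarantees that $\{x_n\}$ has a weakly convergent subsequence. The goal is to show all weak subsequential limits coincide and lie in $F(T)$.

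The key steps, in order, are as follows. \emph{Step 1:} Take any two subsequences $\{x_{n_j}\}$ and $\{x_{n_k}\}$ of $\{x_n\}$ converging weakly to $u$ and $v$ respectively. \emph{Step 2:} Since $\lim_{n\to\infty}\|Tx_n-x_n\|=0$, in particular $\lim_{j\to\infty}\|Tx_{n_j}-x_{n_j}\|=0$, so by Lemma \ref{l1} ($I-T$ demiclosed at zero) we conclude $u\in F(T)$; similarly $v\in F(T)$. \emph{Step 3:} By Lemma \ref{lem1}, $\lim_{n\to\infty}\|x_n-p\|$ exists for every $p\in F(T)$, so in particular $\lim_{n\to\infty}\|x_n-u\|$ and $\lim_{n\to\infty}\|x_n-v\|$ both exist. \emph{Step 4:} Suppose for contradiction $u\neq v$. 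Then by the Opial property, using the subsequence $\{x_{n_j}\}\rightharpoonup u$,
\begin{equation*}
\lim_{n\to\infty}\|x_n-u\|=\limsup_{j\to\infty}\|x_{n_j}-u\|<\limsup_{j\to\infty}\|x_{n_j}-v\|=\lim_{n\to\infty}\|x_n-v\|,
\end{equation*}
and symmetrically, using $\{x_{n_k}\}\rightharpoonup v$,
\begin{equation*}
\lim_{n\to\infty}\|x_n-v\|=\limsup_{k\to\infty}\|x_{n_k}-v\|<\limsup_{k\to\infty}\|x_{n_k}-u\|=\lim_{n\to\infty}\|x_n-u\|.
\end{equation*}
This gives $\lim_{n\to\infty}\|x_n-u\|<\lim_{n\to\infty}\|x_n-u\|$, a contradiction. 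Hence $u=v$, so $\{x_n\}$ has a unique weak subsequential limit, which is therefore the weak limit of the whole sequence, and it belongs to $F(T)$.

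The main obstacle is really just making sure all the hypotheses line up cleanly: that $\lim_n\|Tx_n-x_n\|=0$ is available (this is exactly Theorem \ref{th 1}, given $F(T)\neq\emptyset$), that Lemma \ref{l1} applies (it requires the Opial property and condition $(C)$, both assumed here), and that Lemma \ref{lem1} gives existence of $\lim_n\|x_n-p\|$ for the specific points $u,v\in F(T)$ produced by the demiclosedness argument. None of these is deep; the argument is essentially bookkeeping once the preliminary lemmas are in hand. I would also remark at the start that uniform convexity yields reflexivity, which is what licenses extracting the weakly convergent subsequence in Step 1.
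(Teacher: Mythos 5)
Your proposal is correct and follows essentially the same route as the paper: invoke Theorem \ref{th 1} to get boundedness and $\lim_n\|Tx_n-x_n\|=0$, use reflexivity to extract weak subsequential limits, Lemma \ref{l1} to place them in $F(T)$, Lemma \ref{lem1} for existence of $\lim_n\|x_n-p\|$, and the Opial property to force uniqueness of the weak limit. The only cosmetic difference is that the paper also cites Mazur's theorem to note the weak limits lie in the closed convex set $C$; otherwise the arguments coincide.
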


\begin{proof}
Since $F(T)\neq \emptyset ,$ so by Theorem \ref{th 1} we have that $%
\{x_{n}\} $ is bounded and $\lim_{n\rightarrow \infty }\left \Vert
Tx_{n}-x_{n}\right \Vert =0.$ Since $X$ is uniformly convex hence reflexive,
so by Eberlin's theorem there exists a subsequence $\{x_{n_{j}}\}$ of $%
\{x_{n}\}$ which converges weakly to some $q_{1}\in X$. Since $C$ is closed
and convex, by Mazur's theorem $q_{1}\in C$. By Lemma \ref{l1}, $q_{1}\in
F(T)$. Now, we show that $\{x_{n}\}$ converges weakly to $q_{1}$. In fact,
if this is not true, so there must exist a subsequence $\{x_{n_{k}}\}$ of $%
\{x_{n}\}$ such that $\{x_{n_{k}}\}$ converges weakly to $q_{2}\in C$ and $%
q_{2}\neq q_{1}$. By Lemma \ref{l1}, $q_{2}\in F(T)$. Since $%
\lim_{n\rightarrow \infty }\left \Vert x_{n}-p\right \Vert $ exists for all $%
p\in F(T)$. By Theorem \ref{th 1} and Opial's property, we have%
\begin{eqnarray*}
\lim_{n\rightarrow \infty }\left \Vert x_{n}-q_{1}\right \Vert
&=&\lim_{j\rightarrow \infty }\left \Vert x_{n_{j}}-q_{1}\right \Vert \\
&<&\lim_{j\rightarrow \infty }\left \Vert x_{n_{j}}-q_{2}\right \Vert \\
&=&\lim_{n\rightarrow \infty }\left \Vert x_{n}-q_{2}\right \Vert \\
&=&\lim_{k\rightarrow \infty }\left \Vert x_{n_{k}}-q_{2}\right \Vert \\
&<&\lim_{k\rightarrow \infty }\left \Vert x_{n_{k}}-q_{1}\right \Vert \\
&=&\lim_{n\rightarrow \infty }\left \Vert x_{n}-q_{1}\right \Vert ,
\end{eqnarray*}%
which is contradiction. So $q_{1}=q_{2}.$ This implies that $\{x_{n}\}$
converges weakly to a fixed point of $T$.
\end{proof}

Next we prove the strong convergence theorem.

\begin{Theorem}
Let $C$ be a nonempty compact convex subset of a uniformly convex Banach
space $X$, and let $T:C\rightarrow C$ be a mapping satisfying condition $(C)$%
. For arbitrary chosen $x_{0}\in C$, let the sequence $\{x_{n}\}$ be
generated by $(4)$ for all $n\geq 1$, where $\{ \alpha _{n}\}$ and $\{
\beta _{n}\}$ are sequence of real numbers in $[a,b]$ for some $a,b$ with $%
0<a\leq b<1.$ Then $\{x_{n}\}$ converges strongly to a fixed point of $T$ .
\end{Theorem}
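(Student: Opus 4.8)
The plan is to assemble the strong convergence from three ingredients already available: the existence of a fixed point, the asymptotic regularity $\|Tx_n-x_n\|\to 0$ from Theorem~\ref{th 1}, and the existence of $\lim_n\|x_n-p\|$ from Lemma~\ref{lem1}. First I would observe that a compact subset is weakly compact, so Lemma~\ref{l2} applies and yields $F(T)\neq\emptyset$. Consequently the hypotheses of Theorem~\ref{th 1} are satisfied (the $\{\alpha_n\},\{\beta_n\}$ are already assumed to lie in $[a,b]$ with $0<a\le b<1$), and we conclude that $\{x_n\}$ is bounded and $\lim_{n\to\infty}\|Tx_n-x_n\|=0$.

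Next I would use the compactness of $C$ to extract a subsequence $\{x_{n_j}\}$ converging strongly to some $q\in C$. To identify $q$ as a fixed point, apply Proposition~\ref{p1}(iii) with $x=x_{n_j}$ and $y=q$:
\[
\|x_{n_j}-Tq\|\le 3\|Tx_{n_j}-x_{n_j}\|+\|x_{n_j}-q\|.
\]
Letting $j\to\infty$, the first term on the right tends to $0$ by asymptotic regularity and the second tends to $0$ by the choice of the subsequence, so $x_{n_j}\to Tq$. By uniqueness of strong limits, $Tq=q$, i.e.\ $q\in F(T)$.

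Finally, since $q\in F(T)$, Lemma~\ref{lem1} guarantees that $\lim_{n\to\infty}\|x_n-q\|$ exists. As the subsequence $\{x_{n_j}\}$ converges strongly to $q$, the value of this limit must be $0$; hence the entire sequence $\{x_n\}$ converges strongly to the fixed point $q$ of $T$, which is the assertion.

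This argument is essentially a bookkeeping exercise over the earlier results, so I do not anticipate a genuine obstacle. The only step that deserves a moment's attention is verifying that the hypotheses of Theorem~\ref{th 1} are in force — in particular $F(T)\neq\emptyset$ — which is precisely where compactness enters, via weak compactness and Lemma~\ref{l2}; everything else is the standard "subsequence converges, and $\lim\|x_n-p\|$ exists, therefore the whole sequence converges" pattern.
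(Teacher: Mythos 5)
Your argument is correct and follows essentially the same route as the paper's proof: Lemma~\ref{l2} gives $F(T)\neq\emptyset$, Theorem~\ref{th 1} gives $\|Tx_n-x_n\|\to 0$, compactness yields a strongly convergent subsequence whose limit is shown to be a fixed point via Proposition~\ref{p1}(iii), and Lemma~\ref{lem1} upgrades subsequential convergence to convergence of the whole sequence. Your explicit remark that norm-compactness implies weak compactness (so that Lemma~\ref{l2} applies) is a small clarification the paper leaves implicit, but the proof is otherwise identical.
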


\begin{proof}
By Lemma \ref{l2}, we have that $F(T)\neq \emptyset $ so by Theorem \ref{th
1} we have $\lim_{n\rightarrow \infty }\left \Vert Tx_{n}-x_{n}\right \Vert
=0. $ Since $C$ is compact, so there exists a subsequence $\{x_{n_{k}}\}$ of 
$\{x_{n}\}$ such that $\{x_{n_{k}}\}$ converges strongly to $p$ for some $%
p\in C$. By Proposition \ref{p1}$(iii)$, we have%
\begin{equation*}
\left \Vert x_{n_{k}}-Tp\right \Vert \leq 3\left \Vert
Tx_{n_{k}}-x_{n_{k}}\right \Vert +\left \Vert x_{n_{k}}-p\right \Vert ,\text{
for all }n\geq 1.
\end{equation*}

Letting $k\rightarrow \infty ,$ we get $Tp=p,$ $i.e.,$ $p\in F(T)$. Since,
by Lemma \ref{lem1}, $\lim_{n\rightarrow \infty }\left \Vert
x_{n}-p\right
\Vert $ exists for every $p\in F(T),$ so $x_{n}$ converge
strongly to $p.$
\end{proof}

Senter and Dotson \cite{18} introduced the notion of a mappings satisfying
condition $(I)$ as.

A mapping $T:C\rightarrow C$ is said to satisfy condition $(I)$, if there
exists a nondecreasing function $f:[0,\infty )\rightarrow \lbrack 0,\infty )$
with $f(0)=0$ and $f(r)>0$ for all $r>0$ such that $\left \Vert
x-Tx\right
\Vert \geq f(d(x,F(T)))$ for all $x\in C$, where $%
d(x,F(T))=\inf_{p\in F(T)}\left \Vert x-p\right \Vert $.

Now we prove the strong convergence theorem using condition $(I)$.

\begin{Theorem}
Let $C$ be a nonempty closed convex subset of a uniformly convex Banach
space $X$, and let $T:C\rightarrow C$ be a mapping satisfying condition $(C)$%
. For arbitrary chosen $x_{0}\in C$, let the sequence $\{x_{n}\}$ be
generated by $(4)$ for all $n\geq 1$, where $\{ \alpha _{n}\}$ and $\{
\beta _{n}\}$ are sequence of real numbers in $[a,b]$ for some $a,b$ with $%
0<a\leq b<1$ such that $F(T)\neq \emptyset .$ If $T$ satisfy condition $(I),$
then $\{x_{n}\}$ converges strongly to a fixed point of $T$.
\end{Theorem}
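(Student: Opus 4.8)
The plan is to combine the two standard ingredients: the $\lim_{n\to\infty}\|Tx_n-x_n\|=0$ conclusion from Theorem \ref{th 1}, and the condition $(I)$ hypothesis, which together force $d(x_n,F(T))\to 0$; then one upgrades this to a genuine Cauchy/convergence statement using the fact that $\{\|x_n-p\|\}$ is non-increasing for every $p\in F(T)$ (Lemma \ref{lem1}).

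First I would invoke Theorem \ref{th 1}: since $F(T)\neq\emptyset$, the sequence $\{x_n\}$ is bounded and $\lim_{n\to\infty}\|Tx_n-x_n\|=0$. By Lemma \ref{lem1}, $\lim_{n\to\infty}\|x_n-p\|$ exists for each $p\in F(T)$; applying this to an infimum over $p$ shows that $\lim_{n\to\infty} d(x_n,F(T))$ exists. Next, condition $(I)$ gives $f(d(x_n,F(T)))\leq \|x_n-Tx_n\|\to 0$, so $\lim_{n\to\infty} f(d(x_n,F(T)))=0$. Since $f$ is nondecreasing with $f(0)=0$ and $f(r)>0$ for $r>0$, this forces $\lim_{n\to\infty} d(x_n,F(T))=0$.

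Then I would show $\{x_n\}$ is Cauchy. From Lemma \ref{lem1}, $\|x_{n+1}-p\|\leq\|x_n-p\|$ for all $p\in F(T)$ and all $n$, hence $\|x_{n+m}-p\|\leq\|x_n-p\|$. Given $\epsilon>0$, pick $N$ with $d(x_N,F(T))<\epsilon/4$, and choose $p^\ast\in F(T)$ with $\|x_N-p^\ast\|<\epsilon/2$; then for $n,m\geq N$,
\begin{equation*}
\|x_n-x_m\|\leq\|x_n-p^\ast\|+\|p^\ast-x_m\|\leq 2\|x_N-p^\ast\|<\epsilon.
\end{equation*}
So $\{x_n\}$ is Cauchy in the closed subset $C$ of the Banach space $X$, hence converges to some $q\in C$. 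Finally, $d(q,F(T))=\lim_{n\to\infty}d(x_n,F(T))=0$, and since $F(T)$ is closed (it is closed because $T$ satisfies condition $(C)$: if $p_k\to p$ with $Tp_k=p_k$, then by Proposition \ref{p1}(iii) applied appropriately, or directly from the continuity-type estimate $\|Tp-p\|\leq \|Tp-p_k\|+\|p_k-p\|$ using condition $(C)$ with $x=p$, $y=p_k$ once $\tfrac12\|p-Tp\|\leq\|p-p_k\|$ — for $p_k$ close to $p$ this holds only if $\|p-Tp\|=0$, which one argues by taking $k\to\infty$), we conclude $q\in F(T)$, i.e. $\{x_n\}$ converges strongly to a fixed point of $T$.

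The main obstacle is the last step — cleanly arguing that $F(T)$ is closed for a mapping satisfying condition $(C)$, since such maps need not be continuous. The safest route is not to invoke closedness of $F(T)$ abstractly but to verify $q\in F(T)$ directly: since $\|x_n-Tx_n\|\to 0$ and $x_n\to q$, we get $Tx_n\to q$ as well, and then Proposition \ref{p1}(iii) with $x=x_n$, $y=q$ gives $\|x_n-Tq\|\leq 3\|Tx_n-x_n\|+\|x_n-q\|\to 0$, so $Tq=q$. This mirrors the argument in the preceding compactness theorem and avoids any delicate closedness discussion; I would present it that way.
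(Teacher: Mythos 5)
Your proof is correct, and it follows the paper's strategy up to the point where $\lim_{n\rightarrow \infty }d(x_{n},F(T))=0$ is established (Theorem \ref{th 1} plus condition $(I)$ plus the properties of $f$); after that the two arguments part ways. The paper extracts a subsequence $\{x_{n_{k}}\}$ together with points $y_{k}\in F(T)$ satisfying $\left\Vert x_{n_{k}}-y_{k}\right\Vert <2^{-k}$, uses the monotonicity inequality $(24)$ to show $\{y_{k}\}$ is Cauchy \emph{inside} $F(T)$, and then appeals to the closedness of $F(T)$ to get a fixed point $p$ to which $\{x_{n_{k}}\}$, and hence (by Lemma \ref{lem1}) the whole sequence, converges. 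You instead prove directly that $\{x_{n}\}$ itself is Cauchy, by combining $d(x_{n},F(T))\rightarrow 0$ with the Fej\'{e}r-type monotonicity $\left\Vert x_{n+1}-p\right\Vert \leq \left\Vert x_{n}-p\right\Vert$ from Lemma \ref{lem1}, and then identify the limit $q$ as a fixed point via Proposition \ref{p1}$(iii)$: $\left\Vert x_{n}-Tq\right\Vert \leq 3\left\Vert Tx_{n}-x_{n}\right\Vert +\left\Vert x_{n}-q\right\Vert \rightarrow 0$. Your route is slightly cleaner: it avoids the auxiliary sequence $\{y_{k}\}$ and, more importantly, sidesteps the closedness of $F(T)$, which the paper invokes without justification (it does hold, e.g.\ because $T$ is quasi-nonexpansive by Proposition \ref{p1}$(ii)$, or again by Proposition \ref{p1}$(iii)$, but your final paragraph correctly identifies this as the delicate point and resolves it in the most economical way). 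Your worry in the middle paragraph about arguing closedness directly from condition $(C)$ is unnecessary given that resolution, but it does not affect the validity of the proof as you ultimately present it.
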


\begin{proof}
By Lemma \ref{lem1}, we have $\lim_{n\rightarrow \infty }\left \Vert
x_{n}-p\right \Vert $ exists for all $p\in F(T)$ and so $\lim_{n\rightarrow
\infty }d(x_{n},F(T))$ exists. Assume that $\lim_{n\rightarrow \infty
}\left
\Vert x_{n}-p\right \Vert =r$ for some $r\geq 0$. If $r=0$ then the
result follows. Suppose $r>0$, from the hypothesis and condition $(I)$,%
\begin{equation}
f(d(x_{n},F(T)))\leq \left \Vert Tx_{n}-x_{n}\right \Vert .  
\end{equation}

Since $F(T)\neq \emptyset ,$ so by Theorem \ref{th 2}, we have $%
\lim_{n\rightarrow \infty }\left \Vert Tx_{n}-x_{n}\right \Vert =0.$ So $%
(30)$ implies that%
\begin{equation}
\lim_{n\rightarrow \infty }f(d(x_{n},F(T)))=0.  
\end{equation}

Since $f$ is nondecreasing function, so from $(31)$ we have $%
\lim_{n\rightarrow \infty }d(x_{n},F(T))=0$. Thus, we have a subsequence $%
\{x_{n_{k}}\}$ of $\{x_{n}\}$ and a sequence $\{y_{k}\} \subset F(T)$ such
that%
\begin{equation*}
\left \Vert x_{n_{k}}-y_{k}\right \Vert <\frac{1}{2^{k}}\text{ for all }k\in 
\mathbb{N}
.
\end{equation*}

So using $(24),$we get%
\begin{equation*}
\left \Vert x_{n_{k+1}}-y_{k}\right \Vert \leq \left \Vert
x_{n_{k}}-y_{k}\right \Vert <\frac{1}{2^{k}}.
\end{equation*}

Hence%
\begin{eqnarray*}
\left \Vert y_{k+1}-y_{k}\right \Vert &\leq &\left \Vert
y_{k+1}-x_{k+1}\right \Vert +\left \Vert x_{k+1}-y_{k}\right \Vert \\
&\leq &\frac{1}{2^{k+1}}+\frac{1}{2^{k}} \\
&<&\frac{1}{2^{k-1}}\rightarrow 0,\text{ as }k\rightarrow \infty .
\end{eqnarray*}

This shows that $\{y_{k}\}$ is a Cauchy sequence in $F(T)$ and so it
converges to a point $p$. Since $F(T)$ is closed, therefore $p\in F(T)$ and
then $\{x_{n_{k}}\}$ converges strongly to $p$. Since $\lim_{n\rightarrow
\infty }\left \Vert x_{n}-p\right \Vert $ exists, we have that $%
x_{n}\rightarrow p\in F(T)$. Hence proved.
\end{proof}

\section{Numerical Example}

In order to support analytical proof of Theorem \ref{th3.2} and to
illustrate the efficiency of $K$ iteration method $(4)$, we will use a
numerical example of \cite[Example 1]{8a} for the sake of consistent
comparison.

\begin{Example}
Let the function $T:[0,4]\rightarrow \lbrack 0,4]$ defined by $T(x)=(x+2)^{%
\frac{1}{3}}.$ It is easy to see that $T$ is a contraction mapping. Hence $T$
has a unique fixed point.

In the following table, comparison of the convergence of our new "$K$
iteration process" with the Picard-S iteration, the Thakur New iteration and
the Vatan Two-step iteration processes are given, where $x_{0}=u_{0}=1.99$, $%
\alpha _{n}=\beta _{n}=\frac{1}{4}$ and $n=\overline{1,12}.$%

\begin{table}[h!]
\caption{Iterative values of K, Vatan Two-step, Thakur New and Picard-S iteration processes for $\alpha _{n}=\beta
_{n}=\frac{1}{4},$ for all $n$ and mapping $%
T(x)=(x+2)^{\frac{1}{3}}$.}
      \begin{tabular}{ccccc}
        \hline
         & K & Vatan Two-step & Thakur New & Picard-S\\ \hline
        $x_{0}$ & $1.99$ & $1.99$ & $1.99$ & $1.99$ \\ 
$x_{1}$ & $1.522643193061496$ & $1.527152378405542$ & $1.530163443560674$ & $%
1.530160376515624$ \\ 
$x_{2}$ & $1.521383278248461$ & $1.521453635507796$ & $1.521551978236029$ & $%
1.521551916843118$ \\ 
$x_{3}$ & $1.521379716901169$ & $1.521380654057891$ & $1.521383088492668$ & $%
1.521383087287047$ \\ 
$x_{4}$ & $1.521379706833111$ & $1.521379718941864$ & $1.521379773188262$ & $%
1.521379773164595$ \\ 
$x_{5}$ & $1.521379706804648$ & $1.521379706960085$ & $1.521379708107703$ & $%
1.521379708107238$ \\ 
$x_{6}$ & $1.521379706804568$ & $1.521379706806560$ & $1.521379706830149$ & $%
1.521379706830139$ \\ 
$x_{7}$ & $1.521379706804568$ & $1.521379706804593$ & $1.521379706805070$ & $%
1.521379706805069$ \\ 
$x_{8}$ & $1.521379706804568$ & $1.521379706804568$ & $1.521379706804577$ & $%
1.521379706804577$ \\ 
$x_{9}$ & $1.521379706804568$ & $1.521379706804568$ & $1.521379706804568$ & $%
1.521379706804568$ \\ 
$x_{10}$ & $1.521379706804568$ & $1.521379706804568$ & $1.521379706804568$ & $%
1.521379706804568$ \\ 
$x_{11}$ & $1.521379706804568$ & $1.521379706804568$ & $1.521379706804568$ & $%
1.521379706804568$ \\  \hline
      \end{tabular}
\end{table}

We can easily see that the new $K$ iterations was the first converging one
than the Picard-S, the Thakur New iteration and the Vatan Two-step
iterations.

Graphic representation is given in the following Figure $1,$

\begin{figure}[!h]
\includegraphics[width=10cm,height=8cm]{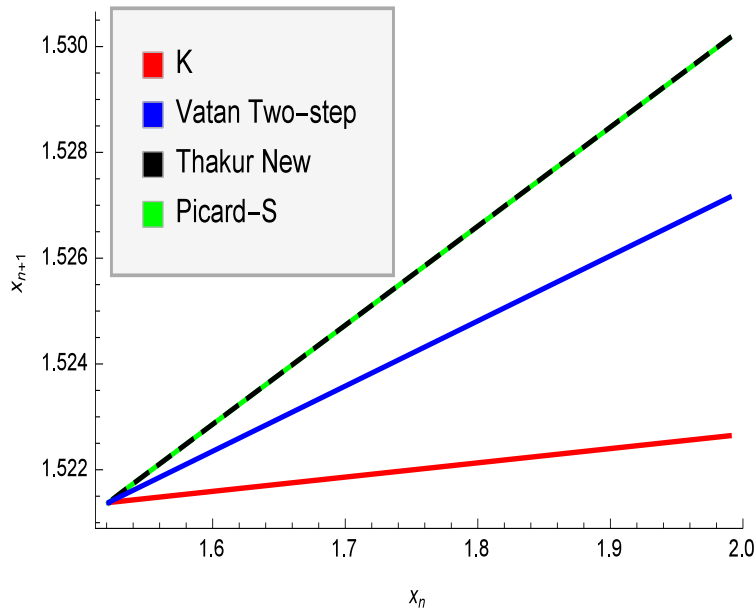} 
\caption{Convergence of K, Vatan Two-step, Thakur New and Picard-S iterations to the fixed point $1.521379706804568$ of mapping $%
T(x)=(x+2)^{\frac{1}{3}}.$}
\end{figure}

\end{Example}

For numerical interpretations first we construct an example of suzuki
generalized nonexpansive mapping which is not nonexpansive.


\newpage

\end{document}